\newtheorem{theorem}{Theorem}
\newtheorem{lemma}{Lemma}
\newtheorem{corollary}[lemma]{Corollary}
\newtheorem{proposition}[lemma]{Proposition}
\newtheorem{definition}[lemma]{Definition}\theoremstyle{definition}
\newtheorem{remark}[lemma]{Remark}\theoremstyle{remark}
\newtheorem{problem}[lemma]{Problem}
\begin{document}

\title{Unipotent Factorization of Vector Bundle Automorphisms}

\author{Jakob Hultgren}
\address{Jakob Hultgren \\
University of Maryland\\
Department of Mathematics\\
4176 Campus Drive - William E. Kirwan Hall\\
College Park, MD 20742-4015\\
US
}
\email{hultgren@umd.edu}

\author{Erlend F. Wold} 
\address{Erlend F. Wold \\
Matematisk Institutt \\ Universitetet i Oslo \\ Postboks 1053 Blindern \\ 0316 Oslo \\ Norway
}
\email{erlendfw@math.uio.no}

\subjclass[2000]{15A54; 15A23; 55N15; 32A38; 19B99}

\maketitle


\begin{abstract}
We provide unipotent factorizations of vector bundle automorphisms of real and complex vector bundles over finite dimensional locally finite CW-complexes. This generalises work of Thurston-Vaserstein and Vaserstein for trivial vector bundles. We also address two symplectic cases and propose a complex geometric analog of the problem in the setting of holomorphic vector bundles over Stein manifolds.

\end{abstract}

\section{Introduction}
By elementary linear algebra any matrix in $\mathrm{SL}_k(\mathbb R)$ or $\mathrm{SL}_k(\mathbb C)$ can be written as a product of elementary matrices $\mathrm{id}+\alpha e_{ij}$, i.e., matrices with ones on the diagonal and at most one non-zero element outside the diagonal. Replacing $\mathrm{SL}_k(\mathbb R)$ or $\mathrm{SL}_k(\mathbb C)$ by $\mathrm{SL}_k(R)$ where $R$ is the ring of continuous real or complex valued functions on a topological space $X$, we arrive at a much more subtle problem. This problem was adressed Thurston and Vaserstein \cite{TV86} in the case where $X$ is the Euclidean space and more generally by Vaserstein \cite{Vas88} for a finite dimensional normal topological space $X$. In particular, Vaserstein \cite{Vas88}  proves that for any finite dimensional normal topological space $X$, and any continuous map $F:X\rightarrow \mathrm{SL}_k(\mathbb R)$ for $k\geq 3$ (and $\mathrm{SL}_k(\mathbb C)$ for $k\geq 2$, respectively) which is homotopic to the constant map $x\mapsto \mathrm{id}$, there are continuous maps $E_1,\ldots, E_N$ from $X$ to the space of elementary real (resp. complex) matrices, such that 
$$ F = E_N\circ \cdots \circ E_1. $$ 

More recently, the problem has been considered by Doubtsov and Kutzschebauch \cite{DK19}. \

Recall that a map $S$ on a vector space is unipotent if $(S-\mathrm{id})^m=0$ for some $m$. Note that, again by elementary linear algebra, providing a factorisation in terms of elementary matrices is equivalent to providing a factorisation in terms of upper and lower triangular unipotent matrices, i.e., matrices of the form 
$$\mathrm{id} + \sum_{i<j} \alpha_{ij}e_{ij} \textnormal{ and } \mathrm{id} + \sum_{i>j} \alpha_{ij}e_{ij},$$
respectively. 

Moreover, note that an element in $\mathrm{SL}_k(R)$ where $R$ is the ring of continuous real valued functions on a topological space $X$ is equivalent to a continuous $\mathrm{SL}_k(\mathbb R)$-valued function on $X$. Similarly, an element in $\mathrm{SL}_k(R)$ where $R$ is the ring of continuous complex valued functions on a topological space $X$ is equivalent to a continuous $\mathrm{SL}_k(\mathbb C)$-valued function on $X$.

In this paper we will consider the following problem: Let $X$ be a CW-complex and 
$$\pi:V\rightarrow X$$
a real or complex vector bundle over $X$ of rank $k$. Moreover, let $S$ be a vector bundle automorphism of $V$ with constant determinant equal to one, in other words $S$ is a homeomorphism $V\rightarrow V$ such that $\pi \circ S = \pi$ and $S$ restricts to a linear map of determinant one on each fiber $\pi^{-1}(x), x\in X$. We will call such an object a \emph{special} vector bundle automorphism. Given this data, can $S$ be factored into a composition of unipotent vector bundle automorphisms, i.e., vector bundle automorphisms that restrict to unipotent maps on the fibers? 

Fixing a local trivialization $\rho:V|_U\rightarrow U\times \mathbb R^k$ (resp. $\rho:V|_U\rightarrow U\times \mathbb C^k$) of $V$ over a subset $U\subset X$ we get that a special vector bundle automorphism $S$ satisfies
$$ \rho \circ S|_U \circ \rho^{-1}(x,y) = (x,F(x)) $$
where $F$ is a $SL_k$-valued function on $U$. In other words, special vector bundle automorphisms can be represented locally as $\mathrm{SL}_k$-valued functions. In this sense the problem considered by Thurston and Vaserstein can be seen as a special case (given by the cases when $V=X\times \mathbb R^k$ and $V=X\times \mathbb C^k$) of the problem considered in this paper.

We will say that a special vector bundle automorphism $S$ is \emph{nullhomotopic} if there is a homotopy $(t,x)\in [0,1]\times X \mapsto S_t(x)$ such that $S_1=S$, $S_0$ is the identity map on $V$ and $S_t$ is a special vector bundle automorphism for each $t\in [0,1]$. Our first result is the following.
\begin{theorem}\label{thm:1}
Let $X$ be a locally finite finite dimensional CW-complex and let $\pi:V\rightarrow X$ be a real (resp. complex) vector bundle 
of rank $k$. 
Assume that $k\geq 3$ (resp. $k\geq 2$) and let $S$ be a nullhomotopic special vector bundle automorphism of $V$. Then there exist unipotent vector bundle automorphisms
$E_1,\ldots, E_N$ such that 
$$
S=E_N\circ\cdot\cdot\cdot\circ E_1.
$$
Moreover, for all $j$ we have that 
$(E_j-\mathrm{id})^2=0$ and $\mathrm{rank}(E_j-\mathrm{id})\in\{0,1\}$.
\end{theorem}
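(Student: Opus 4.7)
The plan is to reduce the bundle statement to Wasserstein's theorem applied locally over trivializing open sets, glued via an induction on a finite good cover of $X$.

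First I would fix a finite open cover $\{U_0, \ldots, U_m\}$ of $X$ such that each $U_i$ is a disjoint union of open sets diffeomorphic to $\mathbb{R}^n$ (with $n = \dim X$), over which $V$ is trivializable; such a cover exists by a standard coloring argument applied to a smooth triangulation of $X$, refined so that each piece trivializes $V$. Accompany this with an increasing sequence of closed sets $\emptyset = K_{-1} \subset K_0 \subset \cdots \subset K_m = X$ arranged so that, for each $i$, the set $K_i \setminus K_{i-1}^{\circ}$ is contained in a compact subset of $U_i$ and is separated from $K_{i-1}$. The induction hypothesis at stage $i$ is that there exist global unipotent automorphisms of $V$ of the required rank-$\leq 1$ nilpotent type whose composition $T_{i-1}$ satisfies $T_{i-1} = S$ on an open neighborhood $L_{i-1}$ of $K_{i-1}$; the base case $T_{-1} = \mathrm{id}$ is immediate.

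For the inductive step, set $R_i = S \circ T_{i-1}^{-1}$, which is the identity on $L_{i-1}$. On each connected component $U_{i,\alpha}$ of $U_i$, the local trivialization of $V$ represents $R_i$ as a smooth map $F \colon U_{i,\alpha} \to \mathrm{SL}_k$. Contractibility of $U_{i,\alpha}$ makes $F$ nullhomotopic, so Wasserstein's theorem yields a factorization $F = G_N \circ \cdots \circ G_1$ with each $G_j = \mathrm{id} + \beta_j e_{p_j q_j}$ elementary. To promote these local factors into global unipotent automorphisms that are also the identity near $K_{i-1}$, I would multiply by a smooth cutoff $\chi$ equal to $1$ on $K_i \setminus L_{i-1}$ and equal to $0$ on $L_{i-1}$ and outside a compact set in $U_{i,\alpha}$: the modified factor $\tilde G_j = \mathrm{id} + \chi \beta_j e_{p_j q_j}$ extends by identity to a global automorphism of $V$, and since $(\chi \beta_j e_{p_j q_j})^2 = 0$ the rank-$\leq 1$ nilpotent form required by the theorem is preserved. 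On the region $\{\chi = 1\} \supset K_i \setminus L_{i-1}$ the composition $\tilde G_N \circ \cdots \circ \tilde G_1$ agrees with $F = R_i$; on $\{\chi = 0\} \supset L_{i-1}$ it equals the identity, which also matches $R_i$. The separation condition on the filtration ensures the transition zone of $\chi$ lies entirely outside $K_i$, so $T_i = \tilde G_N \circ \cdots \circ \tilde G_1 \circ T_{i-1} = S$ on a neighborhood of $K_i$.

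The global nullhomotopy of $S$ enters essentially: every composition of unipotent automorphisms of the stated type is itself nullhomotopic via the straight-line homotopy $t \mapsto \mathrm{id} + tN$, so $T_{i-1}$ and hence $R_i = S \circ T_{i-1}^{-1}$ inherit the nullhomotopy class of $S$; although Wasserstein's theorem is applied on contractible pieces, consistency at the global level requires $S$ itself to be nullhomotopic (it is in fact a necessary condition for the conclusion to hold). I expect the main technical hurdle to be not the algebra of the cutoff operation, but the careful geometric bookkeeping---arranging the cover, the filtration, the neighborhoods $L_{i-1}$, and the cutoffs $\chi$ compatibly across all components and inductive stages so that the separation hypotheses hold simultaneously. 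Once the inductive step is established, after $m+1$ iterations we obtain $T_m = S$ on all of $X$, yielding the desired factorization.
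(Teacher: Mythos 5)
There is a genuine gap, and it is located exactly where you hand-wave about ``the careful geometric bookkeeping.'' Your cutoff $\chi$ must equal $1$ on $K_i\setminus L_{i-1}$ and $0$ on $L_{i-1}$, but these two sets are not separated: since $K_{i-1}\subset K_i\cap L_{i-1}$ and (when $K_i\setminus L_{i-1}\neq\emptyset$) the set $K_i$ meets $\partial L_{i-1}$, the closures of $K_i\setminus L_{i-1}$ and $L_{i-1}$ necessarily intersect, so no continuous $\chi$ with these prescribed values exists. Shrinking one of the sets to force a gap does not help, because then the transition zone of $\chi$ lies inside $K_i$ (or inside $L_{i-1}$), and there you have no control on $\tilde G_N\circ\cdots\circ\tilde G_1$: the $G_j$ produced by Wasserstein's theorem on $U_{i,\alpha}$ are \emph{not} the identity on $L_{i-1}$, even though $R_i$ is, so after cutting off the composition need not equal $R_i$ anywhere that $0<\chi<1$. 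This is the crux of the matter --- Wasserstein's factorization is not canonical, so there is no reason the local factors should agree, or glue, across the transition region. A symptom of the gap is that your argument never actually uses the nullhomotopy of $S$ in any global way (on a contractible piece every map is nullhomotopic), yet nullhomotopy is a \emph{necessary} condition for the conclusion; a proof that works without it cannot be correct.

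The paper's proof circumvents precisely this obstruction. It first puts an auxiliary inner product (resp.\ Hermitian form) on $V$ and uses a Gram--Schmidt--type reduction (Proposition~\ref{splitstep1}) to replace $S$ by a section of $W^{\mathrm{SO}}$ (resp.\ $W^{\mathrm{SU}}$), a fiber bundle with compact fibers and compact structure group. It then invokes the uniform-homotopy result Theorem~\ref{thm:CalderSiegel} (applicable because $\pi_1(SO_k)$ is finite for $k\geq 3$, $\pi_1(SU_k)$ always), chops the uniform homotopy into pieces $\tilde S_j=S_{t_{j+1}}\circ S_{t_j}^{-1}$ that are \emph{uniformly} close to the identity, and factorizes each piece by a Gauss--Jordan elimination (Proposition~\ref{splitstep2}). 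The point is that Gauss--Jordan, unlike Wasserstein's theorem, is a canonical, continuous process with $\Lambda(\mathrm{Id})=(\mathrm{Id},\dots,\mathrm{Id})$, so factors vanish exactly where the input is already the identity (this is stated explicitly in Lemma~\ref{GJ}), which is what makes the cutoff-and-glue step over the triangulation actually work. If you want to salvage your outline, you would need to replace the blind use of Wasserstein's theorem on contractible pieces by a canonical local factorization applied to a near-identity section --- and obtaining a globally near-identity section is exactly what requires the uniform homotopy machinery.
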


\begin{remark}
Note that for a vector space
$V$ and for $E\in\mathrm{Iso}(V)$,
the conditions that $(E-\mathrm{id})^2=0$
and $\mathrm{rank}(E-\mathrm{id})\in\{0,1\}$ are equivalent to 
the existence of $\alpha\in V^*$
\begin{equation}\label{shear}
E(u)=u +\alpha(u)\cdot v
\end{equation}
with $\alpha(v)=0$. We will call isomorphisms on the form \eqref{shear}
\emph{elementary}. 
\end{remark}

\begin{remark}
We make a short remark about the assumption in Theorem~\ref{thm:1} that $k\geq 3$ in the case of a real vector bundle. The fact that sets the case $k=2$ apart is that the fundamental group of $\mathrm{SO}_2(\mathbb R)$ is infinite. In \cite{Vas88}, Vaserstein provides an example of a $\mathrm{SL}_2(\mathbb R)$-valued function on $\mathbb R$ which does not factor into a product of maps into the space of real elementary matrices.
\end{remark}

\begin{remark}
It would be interesting to address the question of how many factors are needed in Theorem~\ref{thm:1}, in a similar spirit as the results in \cite{KS19}
\end{remark}

If $V=\mathbb R^k$ (resp. $\mathbb C^k$) it is customary to require an elementary 
isomorphism to satisfy the additional requirements that $v=e_j$ for some 
$j$, and $\alpha(v)=\lambda\cdot \langle v,e_i\rangle$ for some $i\neq j, \lambda\in \mathbb R$ (resp. $\lambda\in\mathbb C$), 
where $\{e_1,...,e_k\}$ denotes the set of standard basis vectors in $\mathbb R^k$. Assuming 
the necessary additional structure on $V$, we may improve Theorem \ref{thm:1} to obtain 
an analogous stronger form of decomposition by elementary matrices. 

\begin{theorem}\label{thm:2}
Suppose in addition to the data in Theorem \ref{thm:1} that $V$
splits globally into line bundles $\mathcal L=\{L_1,...,L_k\}$. Then, in addition to 
the conclusions in Theorem  \ref{thm:1} we may achieve that 
for any point $x\in X$ and for any frame $\{e_1(x),\ldots,e_k(x)\}$ for $V_x$ such that 
$e_s\in L_s$ for all $s\in \{1,\ldots,k\}$,
that $E_j(x)$ is of the form 
\begin{equation}\label{L-el}
E_j(x)(u)=u + \alpha_j(u)\cdot e_{s_j}(x),
\end{equation}
where $\alpha_j(e_s)\neq 0$ if and only if $s=s_0$ for some fixed $s_0\neq s_j$.
\end{theorem}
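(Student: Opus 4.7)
The plan is to refine the factorization provided by Theorem~\ref{thm:1} using the line bundle splitting $V=L_1\oplus\cdots\oplus L_k$. First apply Theorem~\ref{thm:1} to obtain $S=E_N\circ\cdots\circ E_1$ with each $E_j(u)=u+\alpha_j(u)v_j$ satisfying $\alpha_j(v_j)=0$. It then suffices to show that any elementary automorphism $E$ of this general form may itself be written as a finite product of standard elementaries of the form \eqref{L-el}. Decomposing $v=\sum_s v^{(s)}$ with $v^{(s)}\in\Gamma(L_s)$ and $\alpha=\sum_t\alpha^{(t)}$ with $\alpha^{(t)}\in\Gamma(L_t^*)$, the trace constraint $\alpha(v)=0$ becomes $\sum_s\alpha^{(s)}(v^{(s)})=0$, and the off-diagonal rank-one pieces $\alpha^{(t)}\otimes v^{(s)}$ with $s\neq t$ already give automorphisms of the desired form.

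The core auxiliary statement is pointwise: any trace-zero rank-one perturbation $\mathrm{id}+M$ of the identity in $\mathrm{SL}_k$ factors as a product of standard elementary matrices $\mathrm{id}+\lambda e_{ij}$ with $i\neq j$. This is a form of Gaussian elimination, and can be organized via commutator identities such as $[\mathrm{id}+ae_{ij},\mathrm{id}+be_{jk}]=\mathrm{id}+ab\cdot e_{ik}$ for distinct $i,j,k$, which generate arbitrary shears from a small family of building blocks and also allow one to cancel the "diagonal" contributions $\alpha^{(s)}\otimes v^{(s)}$ against each other using the trace constraint. Translated into the bundle setting, the standard elementary matrices correspond exactly to the shears of \eqref{L-el}, where the role of the standard basis is played by any local frame subordinate to the decomposition $\mathcal{L}$; the fact that the resulting normal form is basis-independent within $\mathcal{L}$ reflects that rescaling $e_s$ does not change an automorphism of the type \eqref{L-el}.

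The principal obstacle will be making this pointwise factorization globally smooth across $X$, because different choices of Gaussian elimination pivots are valid at different points, depending on which of the components $v^{(s)}$, $\alpha^{(t)}$ happen to vanish. I expect to address this by covering $X$ by open sets on which a fixed factorization pattern applies and gluing via a partition of unity: one writes $E=E'\circ E''$ where $E'$ handles the "generic" contribution on a given patch and $E''$ is supported on a neighborhood of the degeneracy locus, and then absorbs $E''$ into additional standard elementary factors supported on that neighborhood. An alternative, and probably cleaner, route is to redo the argument of Theorem~\ref{thm:1} in the present setting, carrying the line bundle structure through each step so that the elementary automorphisms produced are standard from the outset; this avoids the delicate globalization of the pointwise Gaussian elimination, replacing it by a single induction that is automatically smooth in $x$.
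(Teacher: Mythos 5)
Your primary strategy---first run Theorem~\ref{thm:1} to produce general elementary automorphisms $E_j(u)=u+\alpha_j(u)v_j$ and then refine each one pointwise into $\mathcal L$-elementary factors---has a genuine gap at the globalization step, and the partition-of-unity fix you sketch will not close it. The issue is that the set of $\mathcal L$-elementary automorphisms, and more to the point the set of ordered factorizations, is not a convex set, so you cannot average factorizations from overlapping patches. The obstruction is quantitative, not just formal: writing $v_j=\sum_s v_j^{(s)}$ and $\alpha_j=\sum_t\alpha_j^{(t)}$, any explicit Gauss--Jordan-type factorization of $\mathrm{id}+\alpha_j\otimes v_j$ into shears requires choosing a pivot index $s$ with $v_j^{(s)}\neq 0$ (or dually $\alpha_j^{(t)}\neq 0$), and the resulting coefficients involve division by that pivot. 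Near the locus where the pivot vanishes the coefficients blow up, so the factorization is not even locally bounded there, and the decomposition $E=E'\circ E''$ with $E''$ ``supported near the degeneracy locus'' is not something you can produce by cutting off---a cutoff applied to a factor changes the product away from the identity. This is not a technicality that partitions of unity repair; it is the same difficulty that makes the whole subject nontrivial.

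The alternative you mention in your last sentence is, however, exactly right, and it is the paper's proof. Rather than post-processing the output of Theorem~\ref{thm:1}, one reruns its proof while carrying the splitting $\mathcal L$ through every step. The key enabling observation (Section~\ref{sec:AuxiliaryMetrics}) is that the auxiliary inner product, respectively Hermitian form, may be chosen so that the $L_i$ are mutually orthogonal, and the local trivialisations $\psi_j$ may be chosen to send each $L_i$ to the $i$-th coordinate line; then all transition functions are diagonal. With this done, the local frames used in Lemma~\ref{GS} (Gram--Schmidt) and Lemma~\ref{GJ} (Gauss--Jordan) are automatically subordinate to $\mathcal L$, so the local shears these lemmas produce are already of the form \eqref{L-el}, and the cutoff-and-glue mechanism in those lemmas (which multiplies the off-diagonal scalar by a bump function, a legitimate operation on a single shear) preserves this form. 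Since changing the frame $\{e_s(x)\}$ within $\mathcal L$ only rescales each $e_s$, which does not affect whether an automorphism is of the form \eqref{L-el}, the statement is frame-independent as you observed. So: drop the pointwise-refinement route, and develop the ``redo Theorem~\ref{thm:1}'' route; once the metric and trivialisations are adapted to $\mathcal L$, nothing else needs to change.
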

\begin{remark}
We will call a vector bundle automorphism satisfying \eqref{L-el} at any point $\mathcal L$-elementary. 
\end{remark}
\begin{remark}
For any vector bundle $V\rightarrow X$ we may consider the flag bundle $Y=\mathrm{Fl}(V)$
associated to $V$, with a map $p:Y\rightarrow X$ such that $p^*V$ splits into a family $L$ of line bundles (see e.g. Hatcher, \cite{H2003}).
Then, by Theorem~\ref{thm:2}, $p^*S$ factors into a composition of finitely many $\mathcal L$-elementary vector bundle automorphisms. 
\end{remark}

\subsection{The curvature tensor of a Riemannian manifold}
We will now present an interesting special case of Theorem~\ref{thm:1}. Assume that $X$ is a smooth manifold equipped with a Riemannian metric, let $\pi:TX\rightarrow X$ be the tangent bundle of $X$, $\nabla$ its Levi-Civita connection, and $R$ its Riemann curvature tensor. A vector bundle endomorphism of $TX$ is a smooth map $V\rightarrow V$ such that $\pi \circ S = \pi$ and $S$ restricts to a linear map on each fiber $\pi^{-1}(x), x\in X$. Fixing two smooth vector fields $U,V$ on $X$, we get the Riemann curvature endomorphism of $TX$ given by
\begin{equation} \label{eq:CurvatureMap} R(U,V) = \nabla_U\nabla_V - \nabla_V\nabla_U - \nabla_{[U,V]}. \end{equation}
The restriction of this map to any fiber in $TX$ defines a skew-symmetric map with respect to the Riemannian metric. It follows that applying the exponential map to this, we get a vector bundle automorphism of $TX$ which preserves the Riemannian metric, i.e., it can be locally represented by an orthogonal matrix. In particular, it has determinant one. Applying Theorem~\ref{thm:1} to this gives the following:
\begin{corollary}\label{cor:Riemannian}
Let $X$ be a Riemannian manifold, $R(\cdot,\cdot)$ its Riemann curvature tensor, $\nabla$ its Levi-Civita connection, and let $U,V$ be two smooth vector fields on $X$. Then the vector bundle automorphism  of $TX$ given by 
$$ \exp(R(U,V)) = \exp(\nabla_U\nabla_V - \nabla_V\nabla_U - \nabla_{[U,V]})$$
admits a factorisation in terms of unipotent vector bundle automorphisms. 
\end{corollary}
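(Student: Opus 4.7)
The plan is simply to verify that the automorphism $S := \exp(R(U,V))$ fits the hypotheses of Theorem~\ref{thm:1}, after which the conclusion is immediate. Assuming $\dim X \geq 3$ (the rank-one case is vacuous, and the rank-two case lies outside the scope of Theorem~\ref{thm:1}), I need to check that $S$ is a smooth special vector bundle automorphism of $TX$ and that it is nullhomotopic through such automorphisms.

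First I would argue that $S$ is a smooth vector bundle automorphism of $TX$ with constant determinant one. Smoothness is immediate: $R$ is a smooth $(1,3)$-tensor and $U,V$ are smooth vector fields, so by the tensoriality of curvature the map $x \mapsto R(U,V)_x$ is a smooth section of $\mathrm{End}(TX)$, and fiberwise exponentiation is a smooth operation on endomorphisms. For the determinant, I would invoke the standard identity $g(R(U,V)W, Z) = -g(R(U,V)Z, W)$, a consequence of the metric-compatibility of the Levi-Civita connection. This says that each fiber endomorphism $R(U,V)_x$ is $g_x$-skew-symmetric, so $\exp(R(U,V)_x)$ lands in the identity component $\mathrm{SO}(T_xX, g_x)$ and in particular has determinant $+1$.

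For the nullhomotopy, I would take the obvious path $S_t := \exp(tR(U,V))$ for $t \in [0,1]$. This family is jointly smooth in $(t,x)$, with $S_0 = \mathrm{id}_{TX}$ and $S_1 = S$. Since scaling preserves skew-symmetry, $tR(U,V)_x$ remains skew-symmetric for every $t$, so each $S_t$ is a special vector bundle automorphism by the same argument as above. Hence $S$ is nullhomotopic in the required sense, and Theorem~\ref{thm:1} applies to produce the factorization of $S$ into unipotent vector bundle automorphisms.

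There is no substantive obstacle beyond the rank restriction inherited from Theorem~\ref{thm:1}; the corollary is essentially a clean specialization in which the natural exponential flow supplies the nullhomotopy for free. The only real content is the recognition that metric compatibility of $\nabla$ makes the curvature operator fiberwise skew-symmetric, which simultaneously secures $\det S = 1$ and keeps the homotopy $\{S_t\}$ inside the special automorphism group throughout.
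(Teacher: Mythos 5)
Your proposal takes essentially the same route as the paper: verify that $\exp(R(U,V))$ is a nullhomotopic special vector bundle automorphism and then invoke Theorem~\ref{thm:1}, with the determinant-one property coming from metric-compatibility of $\nabla$ (so that $R(U,V)$ is fiberwise $g$-skew-symmetric, hence traceless). The only superficial difference is the choice of nullhomotopy: the paper uses $S_t = \exp(R(tU,tV))$, which by bilinearity of $R$ equals $\exp(t^2R(U,V))$, whereas you use the reparameterization $\exp(tR(U,V))$; these are the same path up to a monotone time change and both land in the orthogonal (hence special) automorphisms for every $t$. Your explicit acknowledgment of the rank restriction inherited from Theorem~\ref{thm:1} (requiring $\dim X\geq 3$ in the real tangent-bundle case, with the rank-one case being vacuous) is a caveat the paper leaves implicit, and is worth having on record.
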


\subsection{Real and complex symplectic vetor bundles}
In \cite{IKL19}, Kutzschebauch, Ivarsson, and L\o w studies factorization properties of symplectic matrices over various rings of complex valued functions. We will now present two results in this direction, one for complex vector bundles and one for real vector bundles. Recall that a symplectic form on a complex or real vector bundle $V$ is a non-degenerate section $\omega$ of $\Lambda ^2V^*$, i.e., 
for each point $x\in X$ and any $u\in V_x$ we have that $\omega_x(u,\cdot)$ is 
not identically zero. We will say that a vector bundle automorphism of $V$ is symplectic if it preserves $\omega$. 

We will start with the complex case. Recall that the compact symplectic group is the group of complex $2k\times 2k$-matrices given by 
$$ \mathrm{USp}_{2k} = \mathrm{Sp}_{2k}(\mathbb C)\cap \mathrm{U}_{2k} $$
where $\mathrm{Sp}_{2k}(\mathbb C)$ is the group of symplectic complex $2k\times 2k$-matrices, i.e., maps from $\mathbb C^{2k}$ to $\mathbb C^{2k}$ preserving the standard complex symplectic form, and $\mathrm{U}_{2k}$ is the group of unitary $2k\times 2k$-matrices. An $\mathrm{USp}_{2k}$-bundle is a complex vector bundle of rank $2k$ together with a special family of trivialisations such that the transition functions between any two trivialisations in this family lie in $\mathrm{USp}_{2k}$. Note that an $\mathrm{USp}_{2k}$-bundle admits a natural complex symplectic form, namely the one given by the standard complex symplectic form in any such trivialisation. We will say that a vector bundle automorphism of an $\mathrm{USp}_{2k}$-bundle is symplectic if it preserves this symplectic form or, equivalently, if it is represented by a complex symplectic matrix in each trivialisation.  \

If in addition, the transition matrices are of the special form 
$$ 
\begin{bmatrix} 
A &  0\\
0 & B 
\end{bmatrix}
$$
then there is a natural splitting $V=L_1\oplus L_2$ of $V$ by two Lagrangian subspaces, and in 
this case we will refer to $V$ as a Lagrangian $\mathrm{USp}_{2k}$-bundle. (Note that in this case we necessarily 
have that $B=(A^T)^{-1}$.)

\begin{theorem}\label{thm:3}
Let $X$ be a locally finite finite dimensional CW-complex  and let $\pi:V\rightarrow X$ be an $\mathrm{USp}_{2k}$-bundle over $X$. Let $S$
be a nullhomotopic symplectic vector bundle automorphism of $V$. Then there exist unipotent symplectic vector bundle automorphisms
$E_1, \ldots, E_N$ such that 
$$
S=E_N\circ\cdot\cdot\cdot\circ E_1.
$$
Moreover, for all $j$ we have that 
$(E_j-\mathrm{id})^2=0$ and $\mathrm{rank}(E_j-\mathrm{id})\in\{0,1,2\}$ and if $V$ is Lagrangian, we may achieve that the $E_j$'s respect the Lagrangian splitting (see Remark \ref{splitting} below).
\end{theorem}

\begin{remark}
In fact, we can achieve that each $E_j$ in Theorem~\ref{thm:3} is of the form

\begin{equation}
\label{eq:elementary_map}
E_j(u) = u + \omega(u,v)\cdot w + \omega(u,w)\cdot v
\end{equation}
for some $v,w\in V$ such that $\omega(v,w)=0$.
\end{remark}
\begin{remark}\label{splitting}
By saying that each $E_j$ respects the Lagrangian splitting, we mean that when expressing 
each $E_j$ as in \eqref{eq:elementary_map} we have that 

either $v,w\in L_1$ or $v,w\in L_2$.
\end{remark}

\begin{remark}
The fact that $V$ is an $\mathrm{USp}_{2k}$-bundle gives us both a symplectic form and a compatible Hermitian metric. This is crucial in our proof of Theorem~\ref{thm:3}. It would be interesting to see to what extent Theorem~\ref{thm:3} can be extended to the more general setting of $\mathrm{Sp}_{2k}(\mathbb C)$-bundles, where we have a symplectic form but possibly no compatible Hermitian metric. 
\end{remark}

We now turn to the case of real symplectic vector bundles. We will denote the group of real symplectic $2k\times 2k$-matrices by $\mathrm{Sp}_{2k}(\mathbb R)$.

\begin{theorem}\label{thm:RealSymp}
Let $X$ be a finite CW-complex and let $\pi:V\rightarrow X$ be an $\mathrm{Sp}_{2k}(\mathbb R)$-bundle over $X$. Let $S$
be a nullhomotopic symplectic vector bundle automorphism of $V$. Then there exist unipotent symplectic vector bundle automorphisms
$E_1, \ldots, E_N$ such that 
$$
S=E_N\circ\cdot\cdot\cdot\circ E_1.
$$
Moreover, for all $j$ we have that 
$(E_j-\mathrm{id})^2=0$ and $\mathrm{rank}(E_j-\mathrm{id})\in\{0,1,2\}$ and if $V$ is Lagrangian, we may achieve that the $E_j$'s respect the Lagrangian splitting.
\end{theorem}
\begin{remark}
Note that $\mathrm{Sp}_2=\mathrm{SL}_2$, hence Theorem~\ref{thm:RealSymp} covers the case of a special vector bundle automorphism of a rank 2 real vector bundle over a compact manifold.  
\end{remark}
\begin{remark}
We make a brief remark about the compactness assumption in Theorem \ref{thm:RealSymp}. The reason our methods fail in the non-compact case is that the first fundamental group of $\mathrm{U}_k=\mathrm{Sp}_{2k}(\mathbb R)\cap \mathrm{O}_{2k}$ is infinite, preventing us from applying Theorem~\ref{thm:CalderSiegel}. It might be tempting to look for a generalisation of Theorem~\ref{thm:RealSymp} to the non-compact setting. However, for $k=1$ the theorem is known to be false in the non-compact setting (see \cite{Vas88}). Moreover, (generalising the construction in \cite{Vas88}), we expect that the following null-homotopic maps from $\mathbb R$ to $\mathrm{Sp}_{2k}(\mathbb R)$ for any positive $k$ provide examples which can't be written as a product of unipotent factors:
$$ t \mapsto 
\begin{bmatrix} 
A(t) &  0\\
0 & A(t) 
\end{bmatrix}
$$
where $A(t)$ is the orthogonal matrix 
$$
\begin{bmatrix} 
\cos(t) &  -\sin(t) & 0 & \ldots & 0 \\
\sin(t) & \cos(t) & 0 & \ldots & 0 \\
0 & 0 & 1 & \ldots & 0 \\
\vdots & \vdots & \vdots &  \ddots & \vdots \\
 0 & 0 & 0 & \ldots & 1 
\end{bmatrix}.
$$
\end{remark}

\subsection{Compact Kähler manifolds}
Let $X$ be a compact Kähler manifold. Then, since the holonomy of a Kähler manifold is in the symplectic group, we get that $\exp(R(U,V))$ is a symplectic vector bundle automorphism of $TX$ (see Section~\ref{sec:Proofs} for details). Applying Theorem~\ref{thm:RealSymp} to this gives the following corollary:
\begin{corollary}\label{cor:Kahler}
Let $X$ be a compact Kähler manifold, $R(\cdot,\cdot)$ be its Riemann tensor, $\nabla$ its Levi-Civita connection and $U,V$ be two smooth real vector fields on $X$. Then the symplectic vector bundle automorphism of $TX$ given by 
$$ \exp(R(U,V)) = \exp(\nabla_U \nabla_V - \nabla_V \nabla_U - \nabla_{[U,V]})$$
admits a factorisation in terms of unipotent symplectic vector bundle automorphisms. 
\end{corollary}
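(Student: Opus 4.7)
The plan is to verify that $\exp(R(U,V))$ satisfies all the hypotheses of Theorem~\ref{thm:RealSymp} for the bundle $TX\to X$, and then invoke that theorem directly. The three things to check are: (a) $TX$ carries an $Sp_{2k}(\mathbb R)$-bundle structure; (b) $\exp(R(U,V))$ is a symplectic vector bundle automorphism; (c) it is nullhomotopic through symplectic automorphisms. Compactness of $X$ is assumed.

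For (a), I would use the Kähler form $\omega$, which is a parallel non-degenerate $2$-form on $TX$. Since it is parallel with respect to $\nabla$, the holonomy group lies inside $U(n)\subset Sp_{2n}(\mathbb R)$ (here $2n = \dim_{\mathbb R} X$). Choosing a cover by geodesically convex charts and trivializing $TX$ by parallel transport from a fixed base point in each chart, the transition functions are holonomy elements, hence lie in $Sp_{2n}(\mathbb R)$. This exhibits $TX$ as an $Sp_{2n}(\mathbb R)$-bundle in the sense of the paper.

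For (b), the curvature $R(U,V)$ lies in the holonomy Lie algebra $\mathfrak{u}(n)\subset \mathfrak{sp}(2n,\mathbb R)$ at every point; this is a standard consequence of the Ambrose–Singer theorem together with $\nabla\omega=0$. Equivalently one checks directly that $\omega(R(U,V)Y,Z)+\omega(Y,R(U,V)Z)=0$, since $R(U,V)$ annihilates $\omega$ by differentiating $\nabla\omega=0$ twice. Consequently the fiberwise exponential $\exp(R(U,V))$ takes values in $Sp_{2n}(\mathbb R)$, so it is a symplectic vector bundle automorphism of $TX$.

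For (c), the homotopy $S_t:=\exp(tR(U,V))$, $t\in[0,1]$, is a smooth family of symplectic vector bundle automorphisms of $TX$ with $S_0=\mathrm{id}$ and $S_1=\exp(R(U,V))$, because $R(U,V)$ is a smooth section of $\mathrm{End}(TX)$ with values in $\mathfrak{sp}(2n,\mathbb R)$ fiberwise. Hence $\exp(R(U,V))$ is a nullhomotopic symplectic automorphism. Applying Theorem~\ref{thm:RealSymp} to this automorphism on the compact manifold $X$ yields the claimed factorization. There is no genuine obstacle here: the content of the corollary is the combination of the classical fact that Kähler curvature is symplectic with Theorem~\ref{thm:RealSymp}; the only subtlety is to point out that compactness of $X$ (not just symplectic triviality of the construction) is what allows Theorem~\ref{thm:RealSymp} to apply.
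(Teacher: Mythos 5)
Your proposal is correct and follows essentially the same route as the paper: reduce to verifying that $\exp(R(U,V))$ is a nullhomotopic symplectic automorphism of $TX$ viewed as an $Sp_{2k}(\mathbb R)$-bundle, then invoke Theorem~\ref{thm:RealSymp} (using compactness of $X$). The small differences are cosmetic: you use the homotopy $\exp(tR(U,V))$ where the paper uses $\exp(R(tU,tV))=\exp(t^2R(U,V))$, and your direct check $\omega(R(U,V)Y,Z)+\omega(Y,R(U,V)Z)=0$ via $\nabla\omega=0$ is a slightly more streamlined version of the paper's computation, which instead passes through $\nabla J=0$ and skew-symmetry with respect to the metric.
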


\subsection{Gromov's Vaserstein problem}
Given a holomorphic map $A$ from a Stein manifold $X$ to $\mathrm{SL}_k(\mathbb C)$ it is natural to ask if $A$ can be written as a product of maps into the set of elementary matrices, or (closely related) if $A$ can be written as a product of maps into unipotent subgroups of $\mathrm{SL}_k(\mathbb C)$. This question, sometimes referred to as Gromov's Vaserstein problem, was suggested by Gromov in \cite{Gro89} as a possible application of his h-principle introduced in the same paper. In 2012 the question was settled by Ivarsson and Kutzschebauch \cite{IK12}. They proved that any holomorphic null-homotopic $\mathrm{SL}_k(\mathbb C)$-valued map on a finite dimensional reduced Stein space can be factorized into a product of holomorphic maps into unipotent subgroups of $\mathrm{SL}_k(\mathbb C)$. The proof was based on Vaserstein's result in \cite{Vas88} together with a refinement of Gromov's h-principle due to Forstneri\v{c} \cite{For10}. 

We would like to propose the following generalization of Gromov's Vaserstein problem:
\begin{problem}
Let $X$ be a Stein manifold and $V$ a holomorphic vector bundle over $X$. Assume $S$ is a null-homotopic holomorphic vector bundle automorphism of $V$ of determinant 1. Can $S$ be factored into a composition of unipotent holomorphic vector bundle automorphisms of $V$?
\end{problem}
A possible application of the present paper is to use Theorem~\ref{thm:1} above together with a similar application of the h-principle as in \cite{IK12} to provide a solution to this problem. 

\subsection{Factorization in terms of exponentials}
The following related factorization problem was treated in \cite{DK19, KS19, MR18}: Let $R$ be a commutative unital ring (for example the ring of holomorphic functions on the disc or, more generally, the space of holomorphic functions on a Stein space). Given an element $S$ in the identity-component of $\mathrm{SL}_k(R)$ or $\mathrm{GL}_k(R)$, are there elements $E_1,\ldots,E_n$ in their respective Lie algebras $\mathfrak{sl}_k(R)$ and $\mathfrak{gl}_k(R)$ such that 
$$ S = \exp(E_N)\circ \ldots \circ \exp(E_1)? $$
In fact, the setup in the present paper suggests a generalization of this problem to automorphisms of vector bundles and Theorem~\ref{thm:1} -- Theorem~\ref{thm:RealSymp} has consequences for this problem. See Remark~\ref{rem:ProdOfExp} in Section~\ref{sec:Proofs} for details. 

\subsection{Overview of proofs}
In most of the paper we will consider a vector bundle automorphism $S$ as a section of the endomorphism bundle $V\otimes V^*$. As a first step we will introduce an auxiliary metric on $V$; an inner product in the case when $V$ is a real vector bundle and a Hermitian form in the case when $V$ is a complex vector bundle. This defines a fiber bundle over $X$ consisting of the points in $V\otimes V^*$ that represent orthogonal maps in the case of a real vector bundle and unitary maps in the case of a complex vector bundle. We will then apply a Gram-Schmidt like process to reduce the factorization problem to the case when $S$ takes values in this fiber bundle (Proposition~\ref{splitstep1}). In particular, this means that $S$ can be considered as a section of a fiber bundle with compact fibers. The next ingredient in the proof is a general theorem on homotopies of sections of fiber bundles with compact fibres proved in Section~\ref{sec:CalderSiegel} (Theorem~\ref{thm:CalderSiegel}). While we assume that $S$ is homotopic to the identity, this theorem allows us to conclude that there is a \emph{uniform} homotopy between $S$ and the identity. Using a simple argument we can then reduce the problem to the case when $S$ is close to the identity (see the first paragraph of the proof of Theorem~\ref{thm:1} and Theorem~\ref{thm:2} in Section~\ref{sec:Proofs}). Theorem~\ref{thm:1} then follows by applying a Gauss-Jordan type process (Proposition~\ref{splitstep2}). 

Theorem~\ref{thm:2} will follow by observing that all of the above can be done while choosing local frames that respect the splitting $\mathcal L$. Theorem~\ref{thm:3} and Theorem~\ref{thm:RealSymp} will follow by a similar argument as in the case of Theorem~\ref{thm:1}, using the symplectic Gram-Schmidt and Gauss-Jordan processes defined in \cite{IKL19}. However, the compactness assumption in Theorem~\ref{thm:RealSymp} ensures existence of a uniform homotopy between $S$ and the identity without using Theorem~\ref{thm:CalderSiegel}.

The main obstacle when using the Gram-Schmidt process and the Gauss-Jordan process is that they both have to be applied locally and they are only well-defined after fixing a local frame of $V$. To achieve global constructions, we apply the Gram-Schmidt and Gauss-Jordan in an inductive manner, while building $X$ attaching 
$m$-cells to the $(m-1)$-skeleton. 
Theorem~\ref{thm:CalderSiegel} generalises a theorem by Calder and Siegel \cite{CalderSiegel78,CalderSiegel80}. The main technical ingredient in the proof is a type of deformations of maps into CW complexes constructed in Proposition~\ref{prop:CWHomotopy}.

\subsection{Organisation of the paper}
The paper is organised as follows:  Section~\ref{sec:CalderSiegel} is devoted to the proof of Theorem~\ref{thm:CalderSiegel}. The main technical ingredient in the proof, Proposition~\ref{prop:CWHomotopy} is formulated and proved in Section~\ref{sec:CW-Complexes}. Section~\ref{sec:Proofs} is devoted to the proof of Theorem~\ref{thm:1} - Theorem~\ref{thm:RealSymp}. Corollary~\ref{cor:Riemannian} and Corollary~\ref{cor:Kahler} are proved at the end of Section~\ref{sec:Proofs}.

\subsection{Acknowledgements}
The authors would like to thank Erik L\o w: First of all for explaining his work in \cite{IKL19} and then for many fruitful discussions beyond that. The first named author also want to thank Frank Kutzschebauch for an interesting and useful discussion during Nordan2019. Both authors where supported by the RCN, grant number 240569. While working on this project the first named author was also supported by the Olle Engkvist Foundation and the Knut and Alice Wallenberg Foundation. 

\section{Homotopies and uniform homotopies}\label{sec:CalderSiegel}
In this section we will consider general fiber bundles with compact structure groups over a topological space $X$. We will define a notion of \emph{uniform} homotopies of sections of such bundles. Moreover, assuming that $X$ is a finite dimensional CW-complex and that the fibers have finite first fundamental group and the homotopy type of a CW-complex of finite type, we will prove that any two homotopic sections can be joined by a uniform homotopy (see Theorem~\ref{thm:CalderSiegel}). This generalizes a theorem by Calder and Siegel \cite{CalderSiegel78,CalderSiegel80}.

\subsection{Uniform homotopies of sections}
Let $\pi: W\rightarrow X$ be a fiber bundle over a topological space $X$ with fiber $Y$ and compact structure group $G$. A homotopy between two sections $s_0$ and $s_1$ of $W$ is a continuous map $H:X\times I\rightarrow W$ such that $H(\cdot,0)=s_0$, $H(\cdot,1)=s_1$ and $H(\cdot,t)$ is a section for each $t\in [0,1]$, i.e., $\pi(H(x,t))=x$ for all $(x,t)\in X\times I$. We will let $C(I:W)$ denote the space of maps $f:I\rightarrow W$ such that $\pi\circ f$ is constant, i.e., such that there is $x_0\in X$ with $\pi(f(t))=x_0$ for all $t\in I$. Then a homotopy $H$ of sections of $W$ induces a map $h:X\rightarrow C(I:W)$, given by $h(x)= H(x,\cdot)$. In the rest of this section we will use this map, rather than $H$, to refer to a homotopy. 

If $(U,\rho)$ is a local trivialization of $W$, then composition with $\rho$ induces an invertible map $\tilde \rho: C(I:W|_U)\rightarrow C(I:(U\times Y))$. Moreover, letting $\pi_2:U\times Y\rightarrow Y$ be the projection on the second factor we get that composition with $\pi_2$ induces a map 
$\tau: C(I:(U\times Y))\rightarrow C(I,Y)$. Given a homotopy $h:X\rightarrow C(I:W)$ and letting $h_U=\tau \circ \tilde\rho\circ h|_U$ we get the following commutative diagram
\vskip 1cm

\begin{center}
\begin{tikzpicture}
  \matrix (m) [matrix of math nodes,row sep=3em,column sep=4em,minimum width=2em]
  {
    C(I:W|_U) & C(I:(U\times Y))  &  C(I,Y) \\
   U & &   \\};
    \path[-stealth]
(m-1-1) edge node [above] {$\tilde\rho$} (m-1-2)
(m-1-2) edge node [above] {$\tau$} (m-1-3)
(m-2-1) edge node [left] {$h|_U$} (m-1-1)
(m-2-1) edge node [below] {$h_U$} (m-1-3);
 \end{tikzpicture}
 \end{center}

\vskip 1cm

The map $h_U$ can be thought of as a local representative of $h$ in the trivialization $(U,\rho)$. Note that the structure group $G$ of $W$ acts on $C(I,Y)$ by composition. Given $x\in U$ and $t\in I$ we have that $h(x)(t)\in W$ and $h_U(x)(t)\in Y$ and $h|_U$ can be recovered from $h_U$ by
$$ h(x)(t) = (\tau\circ\tilde\rho)^{-1}(x,h_U(x)(t)).$$
Moreover, let $(U',\rho')$ be another local trivialization of $W$ and assume $x\in U\cap U'$. Then $\pi_2\circ \rho' \circ h|_U(x) = g\circ \pi_2\circ \rho \circ h|_U(x)$ for some element $g$ in the structure group $G$. It follows that 
$$ h_{U'}(x) = g \circ h_{U}(x). $$
In particular, fixing a $G$ invariant set $K\subset C(I,Y)$, we get that $h_U(x)\in K$ if and only if $h_{U'}(x)\in K$. 
\begin{definition}
Let $\pi: W\rightarrow X$ be a fiber bundle over a topological space $X$ with fiber $Y$ and compact structure group $G$. Then a homotopy of sections of $W$ is a \emph{uniform} homotopy if there is a $G$-invariant compact set $K\subset C(I,Y)$ such that for any $x\in X$ and some (hence any) local trivialization $(\rho,U)$ of $W$ over a neighbourhood $U$ of $x$ we have $h_U(x)\in K$. 
\end{definition}

\begin{lemma}\label{lem:ArcelaAscoli}
Let $h$ be a uniform homotopy of a fiber bundle $W$ as above. Assume in addition that the fiber $Y$ is a metric space with distance function $d:Y\times Y \rightarrow \mathbb{R}_{\geq 0}$. Then the following holds: For any $\epsilon>0$, there is a finite set of real numbers $0=t_1<t_2<\ldots<t_N=1$ such that for any $x\in X$ and any trivialization over a neighbourhood $U$ of $x$, we have $d(h_U(x)(t),h_U(x)(t'))<\epsilon$ whenever $t,t'\in [t_i,t_{i+1}]$ for some $i$. 
\end{lemma}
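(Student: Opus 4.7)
The plan is to recognize this as an Arzelà--Ascoli style argument applied to the $G$-invariant compact set $K \subset C(I,Y)$ furnished by the hypothesis of uniform homotopy. Throughout, $C(I,Y)$ carries the compact-open topology, which coincides with the topology of uniform convergence induced by the sup metric $d_\infty(f,g) = \sup_{t\in I} d(f(t),g(t))$, since $I$ is compact and $Y$ is a metric space.

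First I would invoke the classical Arzelà--Ascoli theorem: a subset of $C(I,Y)$ is relatively compact in the sup topology if and only if it is equicontinuous and pointwise relatively compact. Since $K$ is compact in $C(I,Y)$, it is in particular equicontinuous. Thus, given $\epsilon > 0$, there exists $\delta > 0$ such that for every $f \in K$ and every pair $t,t' \in I$ with $|t-t'| < \delta$ we have $d(f(t),f(t')) < \epsilon$.

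Next I would choose any partition $0 = t_1 < t_2 < \cdots < t_N = 1$ whose mesh is less than $\delta$. For an arbitrary point $x \in X$ and any local trivialization $(U,\rho)$ of $W$ over a neighbourhood of $x$, the definition of uniform homotopy gives $h_U(x) \in K$. Applying the equicontinuity estimate to $f = h_U(x) \in K$ for any $t,t' \in [t_i, t_{i+1}]$ immediately yields $d(h_U(x)(t), h_U(x)(t')) < \epsilon$, as required.

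The main conceptual point, rather than an obstacle per se, is the compatibility across trivializations: we must ensure that if one changes trivialization at $x$, the new local representative still lies in the same compact set $K$ so that the same $\delta$ works. This is precisely where the $G$-invariance of $K$ is used, together with the remark preceding the definition that $h_{U'}(x) = g \circ h_U(x)$ for some $g \in G$. Since $K$ is $G$-invariant, this forces $h_{U'}(x) \in K$ whenever $h_U(x) \in K$, and hence the partition produced from the equicontinuity of $K$ works uniformly in both $x$ and the choice of trivialization.
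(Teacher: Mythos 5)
Your proof is correct and follows essentially the same approach as the paper: invoke Arzelà--Ascoli to deduce equicontinuity of the compact set $K$, then choose a partition of $I$ with mesh finer than the resulting $\delta$. The paper's proof is more terse (it stops after noting equicontinuity of $K$), but your added details---including the remark that $G$-invariance of $K$ ensures independence of the choice of trivialization---are exactly the implicit content the paper's proof relies on.
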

\begin{proof}
By by definition, there is a compact $G$-invariant set $K\subset C(I,Y)$ such that for any $x\in X$ and any local trivialization $(U,\rho)$ of $W$ over a neighbourhood $U$ of $x$, $h_U(x)\in K$. By the Arzela-Ascoli Theorem $K\subset C(I,Y)$ is equicontinuous. This proves the lemma. 
\end{proof}

\subsection{Existence of uniform homotopies}
From now on we will assume that $X$ is a CW-complex of dimension $d<\infty$ and that $Y$ has the homotopy type of a CW-complex of finite type. The main theorem of this section is the following theorem:
\begin{theorem}\label{thm:CalderSiegel}
Let $X$ be a locally finite CW-complex of finite dimension, and $W\rightarrow X$ be a fiber bundle over $X$ with compact structure group and fiber $Y$. Assume also that $Y$ has the the homotopy type of a CW-complex of finite type and 
that $\pi_1(Y)$ is finite.
Furthermore, let $h:X\times I\rightarrow W$ be a homotopy of sections. Then there is a uniform homotopy of sections
$\tilde h:X\times I\rightarrow W$ such that $\tilde h(\cdot,0)=h(\cdot,0)$ and $\tilde h(\cdot,1)=h(\cdot,1)$.
\end{theorem}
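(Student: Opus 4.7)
The plan is to deform $h$ skeleton-by-skeleton over a fine triangulation of $X$, using Proposition~\ref{prop:CWHomotopy} as the inductive engine and exploiting compactness of $G$ at the very end. First I would replace $Y$ by a homotopy equivalent CW-complex of finite type, and choose a triangulation of $X$ fine enough that each closed simplex $\sigma$ lies inside a trivializing patch $U_\sigma$ for $W$. In that setting the problem becomes: for each simplex $\sigma$, deform the map $x\mapsto h_{U_\sigma}(x)$ from $\sigma$ into the path space $C(I,Y)$ so that its image sits in a pre-chosen compact subset of $C(I,Y)$, while keeping the endpoint data $x\mapsto h(x,0)$ and $x\mapsto h(x,1)$ fixed.

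I would then run an induction on the $k$-skeleton $X^{(k)}$. Suppose the deformation has already been achieved on a neighborhood of $X^{(k-1)}$, producing a compact set $K_{k-1}\subset C(I,Y)$ containing all the local path-families over that skeleton. For a $k$-simplex $\sigma$, the restriction of $h_{U_\sigma}$ to $\partial\sigma$ already lands in $K_{k-1}$, and applying Proposition~\ref{prop:CWHomotopy} one deforms $h_{U_\sigma}|_\sigma$, rel $\partial\sigma$ and with endpoints fixed, into a compactly supported family, producing an enlarged compact set $K_k$. The role of the hypothesis that $\pi_1(Y)$ is finite appears at exactly this step: the obstruction to such an extension lives in the homotopy groups of a fiber of the endpoint evaluation $C(I,Y)\to Y\times Y$, which is weakly equivalent to $\Omega Y$; finiteness of $\pi_0(\Omega Y)=\pi_1(Y)$ means only finitely many relative homotopy classes of paths ever arise, and after absorbing a representative of each into the compact set the obstructions can be killed at each stage. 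Finite-dimensionality of $X$ then terminates the induction after at most $\dim X$ steps. At the very end I would replace the resulting compact set $K$ by $G\cdot K$, which remains compact because $G$ is, to secure the required $G$-invariance.

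The main obstacle is organising the obstruction theory so that all the simultaneous constraints are met at once: preserving both endpoints of every path, compatibility along boundaries of simplices in the inductive step, and compatibility across different trivializations via the $G$-action on $C(I,Y)$. In effect this bookkeeping is the technical content of Proposition~\ref{prop:CWHomotopy}, which I would apply as a black box; the present theorem is then essentially a packaging of that proposition into the fiber-bundle language over a triangulated base, together with the elementary observation that a $G$-orbit of a compact set under a compact group action is compact.
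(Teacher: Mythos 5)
Your overall strategy matches the paper's: deform $h$ inductively over a locally finite decomposition of $X$, feed the problem through the path fibration $p:C(I,Y)\to Y\times Y$ with fiber $\Omega Y$, use Proposition~\ref{prop:CWHomotopy} to push into a finite skeleton, and at the end absorb the $G$-action by compactness of $G$. The paper uses an open cover coloured via Ostrand's theorem instead of a literal triangulation, but that is a cosmetic difference.

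There is, however, a genuine gap at the heart of the inductive step. When $X$ is non-compact, each stage of your induction touches \emph{infinitely many} simplices. Each application of Proposition~\ref{prop:CWHomotopy} over a simplex produces \emph{some} compact set, but you assert that together they produce ``an enlarged compact set $K_k$'' without any argument — and the union of infinitely many compact sets is generally not compact. The paper resolves exactly this point with Lemma~\ref{lemma:DeformToSkeleton}, whose essential feature (flagged in the remark following it) is that the output compact set $K'$ depends \emph{only} on $d$, $K$, and the trivializing set $B\subset Y\times Y$, and not on the particular patch or map. Combined with a \emph{finite} open cover $\{B_i\}_{i=1}^m$ of $Y\times Y$ over which $p$ trivializes, this yields only finitely many possible output sets $K_k^{i}$, $i\in\{1,\dots,m\}$, so $K_k=\bigcup_i G K_k^{i}$ is compact. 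Your proposal applies Proposition~\ref{prop:CWHomotopy} ``as a black box'' and treats the bookkeeping as contained in it, but the uniformity statement is not part of that proposition — it is the content of Lemma~\ref{lemma:DeformToSkeleton}, and without it the induction does not close.

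A smaller point: your explanation of why $\pi_1(Y)$ must be finite — in terms of finitely many relative homotopy classes of paths and killing obstructions — is a reasonable heuristic but is not how the hypothesis is actually used. In the paper, $\pi_1(Y)=\pi_0(\Omega Y)$ finite is what (together with finite type of $Y$, via Wall's finiteness theorem) guarantees that $\Omega Y$ itself has the homotopy type of a CW complex of \emph{finite type}, so that its $d$-skeleton $(\Omega Y)^d$ is compact. That compactness is precisely what Proposition~\ref{prop:CWHomotopy} needs to deliver a compact image.
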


Before we prove Theorem~\ref{thm:CalderSiegel}, we note that by Lemma~\ref{lem:ArcelaAscoli} we have the following corollary:
\begin{corollary}
Assume in addition that the fiber $Y$ of $W$ is a metric space with distance function $d:Y\times Y \rightarrow \mathbb{R}_{\geq 0}$. Then there is a homotopy $\tilde h$ between $h_0$ and $h_1$, with the property that for any $\epsilon>0$ there is a finite set of real numbers $0=t_1<t_2<\ldots<t_N=1$ such that for any $x\in X$ and any trivialization over a neighbourhood $U$ of $x$, we have $d(h_U(x)(t),h_U(x)(t'))<\epsilon$ whenever $t,t'\in [t_i,t_{i+1}]$ for some $i$.
\end{corollary}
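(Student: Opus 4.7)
The plan is to deduce this corollary as a direct two-line consequence of the results immediately preceding it, namely Theorem~\ref{thm:CalderSiegel} and Lemma~\ref{lem:ArcelaAscoli}. First I would invoke Theorem~\ref{thm:CalderSiegel} on the given homotopy $h$ between $h_0$ and $h_1$. The theorem's hypotheses, that $X$ be a finite dimensional smooth manifold, that $W \to X$ have compact structure group with fiber $Y$ having the homotopy type of a CW complex of finite type, and that $\pi_1(Y)$ be finite, are all inherited from the hypotheses of the corollary (the extra assumption here is only that $Y$ carry a metric, which plays no role in producing the uniform homotopy). The output is a uniform homotopy $\tilde h : X \times I \to W$ with $\tilde h(\cdot, 0) = h_0$ and $\tilde h(\cdot, 1) = h_1$.

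Having produced $\tilde h$, I would then feed it into Lemma~\ref{lem:ArcelaAscoli}, whose hypotheses require exactly (i) a uniform homotopy of sections and (ii) a metric on the fiber $Y$. Both are now in hand. The lemma directly supplies, for every $\epsilon > 0$, the finite partition $0 = t_1 < t_2 < \cdots < t_N = 1$ with the required uniform oscillation bound $d\bigl(\tilde h_U(x)(t), \tilde h_U(x)(t')\bigr) < \epsilon$ whenever $t, t' \in [t_i, t_{i+1}]$, valid uniformly in $x \in X$ and independent of the chosen trivialization (the latter because the defining compact set $K \subset C(I,Y)$ is $G$-invariant). This yields the claim.

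Since the proof merely concatenates the two preceding results, there is essentially no obstacle at this stage; all the genuine difficulty has been isolated into the proof of Theorem~\ref{thm:CalderSiegel}, which in turn rests on the CW-complex deformation construction of Proposition~\ref{prop:CWHomotopy}. The only observation worth making in the write-up is that the assumption ``$Y$ is metric'' is used solely to invoke Arzel\`a--Ascoli inside the proof of Lemma~\ref{lem:ArcelaAscoli}, and does not feed back into the construction of $\tilde h$ itself.
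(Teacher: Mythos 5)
Your proof is correct and is exactly the argument the paper has in mind: apply Theorem~\ref{thm:CalderSiegel} to produce a uniform homotopy $\tilde h$ between $h_0$ and $h_1$, then apply Lemma~\ref{lem:ArcelaAscoli} (Arzel\`a--Ascoli) to that uniform homotopy. The paper states the corollary without a separate proof, introducing it with ``we note that by Lemma~\ref{lem:ArcelaAscoli} we have the following corollary,'' which matches your two-step deduction.
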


The main technical ingredient in the proof of Theorem~\ref{thm:CalderSiegel} is a result on homotopies of maps into CW-complexes proved in Section~\ref{sec:CW-Complexes}. We will summarize the consequences of this needed to prove Theorem~\ref{thm:CalderSiegel} in Lemma~\ref{lemma:DeformToSkeleton} below. To state Lemma~\ref{lemma:DeformToSkeleton} we will first introduce some notation.

Let $p:C(I,Y)\rightarrow Y\times Y$ be the map given by $p(f)=(f(0),f(1))$. Given a homotopy $h:X\rightarrow C(I:W)$ and a local trivialization $(U,\rho)$ of $W$, we have the following commutative diagram:
\vskip 1cm
\begin{center}
\begin{tikzpicture}
  \matrix (m) [matrix of math nodes,row sep=3em,column sep=4em,minimum width=2em]
  {
      &  C(I,Y) \\
    U & Y\times Y  \\};
    \path[-stealth]
(m-2-1) edge node [above] {$h_U$} (m-1-2)
(m-1-2) edge node [right] {$p$} (m-2-2)
(m-2-1) edge node [below] {$p\circ h_U$} (m-2-2);
 \end{tikzpicture}
 \end{center}
\vskip 1cm
Note that if $h$ is a homotopy between two sections $s_0, s_1$ of $W$, then $h':X\rightarrow C(I,W)$ is also a homotopy between $s_0$ and $s_1$ if and only if $$p\circ h_U = p\circ h'_U$$
for any local trivialization $(U,\rho)$ of $W$. 

We will now fix a point in $Y$ and let $\Omega Y$ denote the pointed loop space of $Y$. By standard theory, there is an open covering $\{B_i\}_{i=1}^\infty$ of $Y\times Y$ such that for each $i$, the fibration $p|_{p^{-1}(\overline B_i)}:p^{-1}(\overline B_i) \rightarrow \overline B_i$ is homotopy equivalent to the trivial fibration $\pi_1:\overline B_i \times \Omega Y\rightarrow \overline B_i$.

\begin{lemma}\label{lemma:DeformToSkeleton}
Let $U\subset X$ and $p:C(I,Y)\rightarrow Y\times Y$ be the fibration above. Assume that $\overline U$ is contained in a finite subcomplex 
$X'\subset X$ and that 
$h_U:X'\rightarrow C(I,Y)$ is a continuous map such that $B=p\circ h_U(X')\subset B_i$ with $B_i$ as above. 
Let $A\subset U$ and assume $h_U(A)$ is contained in a compact set $K\subset C(I,Y)$. Then there is a compact set $K'\subset C(I,Y)$ depending only on $d$, $K$ and $i$, and a fiber homotopy $H:U\times I\rightarrow C(I,Y)$ such that $H(\cdot,0) = h_U$,
$$ p\circ H(\cdot,t) = p \circ h_U  $$
for all $t\in [0,1]$ and 
$$ H(U\times \{1\})\cup H(A\times I) \subset K'. $$
\end{lemma}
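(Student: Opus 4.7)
My plan is to use the trivialization of $p$ over $\overline{B}$ to reduce to a deformation question for maps into the pointed loop space $\Omega Y$, and then to invoke Proposition~\ref{prop:CWHomotopy} to push such a map into a compact subset depending only on $d$.

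First I would fix a fiber homotopy equivalence $\Phi:\overline{B}\times \Omega Y\to p^{-1}(\overline{B})$ exhibiting the triviality of $p$ over $\overline{B}$. Writing $h_U(x)=\Phi(p(h_U(x)),\tilde{h}_U(x))$ then defines a continuous map $\tilde{h}_U:U\to \Omega Y$, and since both $K$ and $\overline{B}$ are compact and $\Phi$ is continuous, the image $\tilde{h}_U(A)$ lies in some compact set $\tilde{K}\subset\Omega Y$ whose choice depends only on $K$ and $B$. A fiber homotopy of $h_U$ in $C(I,Y)$ corresponds via $\Phi$ to an unconstrained homotopy $\tilde{H}:U\times I\to\Omega Y$ with $\tilde{H}(\cdot,0)=\tilde{h}_U$, and the conclusion of the lemma will be obtained by setting $H(x,t):=\Phi(p(h_U(x)),\tilde{H}(x,t))$ and $K':=\Phi(\overline{B}\times \tilde{K}')$, where $\tilde{K}'\subset\Omega Y$ is a compact set in which $\tilde{H}$ will be confined.

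Next, I would observe that since $Y$ has the homotopy type of a CW complex of finite type and $\pi_{0}(\Omega Y)=\pi_1(Y)$ is finite, $\Omega Y$ itself is homotopy equivalent to a CW complex of finite type, each of whose skeleta is compact. The strategy is then to apply Proposition~\ref{prop:CWHomotopy} to the map $\tilde{h}_U:U\to\Omega Y$, to the compact subspace $A\subset U$, and to the compact set $\tilde K$: this should yield a homotopy $\tilde{H}$ starting at $\tilde{h}_U$ whose time-$1$ map factors through the $d$-skeleton $(\Omega Y)^{(d)}$, and such that trajectories $\tilde{H}(\{x\}\times I)$ with $x\in A$ never leave a prescribed compact set $\tilde{K}'\supset \tilde{K}\cup(\Omega Y)^{(d)}$ depending only on $d$ and $\tilde{K}$. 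This will give the dependence of $K'$ on $d$, $K$ and $B$ claimed in the statement.

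The main obstacle I expect to encounter is precisely the second step. Classical cellular approximation gives a homotopy of $\tilde{h}_U$ into the $d$-skeleton of $\Omega Y$, but provides no a priori control on where points travel during the deformation — control that is essential here in order to keep $\tilde{h}_U(A)$ inside a fixed compact set at all times, not merely at time $1$. The role of Proposition~\ref{prop:CWHomotopy} will be to supply this quantitative control, presumably by performing the skeletal collapse one cell at a time via an explicit deformation retraction of each high-dimensional cell onto (a neighborhood of) its attaching sphere, which only moves a point within the cell containing it so that the full trajectory is contained in a controlled compact region. The finiteness of $\pi_1(Y)$, inherited by $\pi_0(\Omega Y)$, is what renders the relevant skeleta of $\Omega Y$ compact and thereby makes this control meaningful.
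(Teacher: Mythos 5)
Your overall route is the same as the paper's: trivialize $p$ over $\overline{B}$, pass to a map into $\Omega Y$, invoke Proposition~\ref{prop:CWHomotopy} (using Wall's theorem so that $\Omega Y$ is of the homotopy type of a CW complex of finite type, with the finiteness of $\pi_0(\Omega Y)=\pi_1(Y)$ ensuring finitely many cells in each dimension and hence compact skeleta), and transport the resulting homotopy back. The key points you identify — that $\Omega Y$ has compact skeleta, and that Proposition~\ref{prop:CWHomotopy} must control the whole trajectory and not just the endpoint — are exactly right.

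However, there is a genuine gap in the reduction step. The map $\Phi:\overline B\times\Omega Y\to p^{-1}(\overline B)$ is only a \emph{fiber homotopy equivalence}, not a homeomorphism: $C(I,Y)\to Y\times Y$ is a Hurewicz fibration, not a fiber bundle, so over $\overline B$ one only gets fiber-homotopy triviality. Consequently the equation $h_U(x)=\Phi\bigl(p(h_U(x)),\tilde h_U(x)\bigr)$ need not have a solution $\tilde h_U(x)$, so $\tilde h_U$ is not well-defined; and even if one instead defines $\tilde h_U$ via a homotopy inverse $\phi:p^{-1}(\overline B)\to\overline B\times\Omega Y$, the formula $H(x,t):=\Phi\bigl(p(h_U(x)),\tilde H(x,t)\bigr)$ would give $H(\cdot,0)=\Phi\circ\phi\circ h_U$, which is only homotopic to $h_U$, not equal to it — so the required initial condition $H(\cdot,0)=h_U$ fails. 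The paper's fix is to fix the pair $(\phi,\psi)$ together with a homotopy $F$ between $\mathrm{id}_{p^{-1}(\overline B)}$ and $\psi\circ\phi$, define $\tilde h_U=\pi_2\circ\phi\circ h_U$, and then take $H$ to be the \emph{concatenation}: first run $F(h_U(\cdot),\cdot)$ on $[0,1/2]$ to get from $h_U$ to $\psi\circ\phi\circ h_U$, then run $\psi\bigl(p\circ h_U(\cdot),\Theta(\cdot,\cdot)\bigr)$ on $[1/2,1]$. Accordingly the compact set $K'$ must also absorb the initial leg of the concatenation; the correct $K'$ is $F(K\times I)\cup\psi\bigl(\overline B\times(\Omega Y)^{\max\{d,m_K\}}\bigr)$, not merely $\Phi(\overline B\times\tilde K')$. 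This is a repair rather than a change of strategy, but as written your construction does not produce a homotopy starting at $h_U$.
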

\begin{remark}
Lemma~\ref{lemma:DeformToSkeleton} above is closely related to Lemma~2 in \cite{CalderSiegel80}, stating that a CW-complex of finite type satisfies the \emph{relative compressibility property}. However, Lemma~\ref{lemma:DeformToSkeleton} above is stronger in that the set $K'$ only depends on $d$, $K$ and $B$. In other words, given $d$, $K$ and $B$ we have that $K'$ is independent of $h_U$ and $U$. This will be crucial when applying it to prove Theorem~\ref{thm:CalderSiegel}.
\end{remark}

\begin{proof}
By assumption, the fibration $p: p^{-1}(\overline B)\rightarrow \overline B$ is homotopy equivalent to the trivial fibration $\pi_1:\overline B \times \Omega Y\rightarrow \overline B$. 

Let $\pi_2: \overline B \times \Omega Y\rightarrow\Omega Y$ be the projection onto the second factor. Moreover, let 
$$ \phi: p^{-1}(\overline B)\rightarrow \overline B\times \Omega Y, \;\;\; \psi:\overline B\times \Omega Y\rightarrow p^{-1}(\overline B)$$ 
be a homotopy equivalence and $F:p^{-1}(\overline B)\times I\rightarrow p^{-1}(\overline B)$ be a homotopy between the identity map on $p^{-1}(\overline B)$ and $\psi\circ\phi$. By \cite{Wall65}, $\Omega Y$ is homotopic to a CW-complex of finite type. Fix such a homotopy equivalence and for positive integers $m$, let $(\Omega Y)^m$ be the $m$-skeleton of $\Omega Y$ induced by this homotopy equivalence. By compactness of $\pi_2\circ \phi(K)\subset \Omega Y$, we get that $\pi_2\circ\phi(K)$ is contained in $(\Omega Y)^{m_K}$ for some $m_K$. Applying Proposition~\ref{prop:CWHomotopy} in Section~\ref{sec:CW-Complexes} we get a homotopy $\Theta:U\times I\rightarrow \Omega Y$ such that $\Theta(\cdot,0) = \pi_2\circ \phi\circ h_U$ and the following inclusions hold:
\begin{eqnarray}
\Theta(U\times \{1\}) & \subset & (\Omega Y)^d \nonumber \\
\Theta(A\times I) & \subset & (\Omega Y)^{m_K}. \nonumber
\end{eqnarray}

We will let $\gamma$ denote the map $p\circ h_U$. Let $H$ be given by
$$ 
H(x,t) = 
\begin{cases}
F(h_U(x),2t), & t \in [0,1/2] \\
\psi\left(\gamma(x), \Theta(x,2t-1)\right), & t \in (1/2,1].
\end{cases}
$$
This means $H(\cdot,0) = F(\cdot,0)\circ h_U = h_U$. Continuity of $H$ follows from
$$ F(h(x),1) = \psi\circ\phi\circ h_U(x) = \psi \left(\gamma(x),\Theta(x,0)\right). $$
Moreover, 
\begin{eqnarray}
H(A,I) & \subset & F(h_U(A)\times I) \cup \psi \left(\gamma(A)\times\Theta(A,I)\right) \nonumber \\
& \subset & F(K\times I) \cup \psi \left(\overline B\times (\Omega Y)^{m_K}\right) \nonumber
\end{eqnarray}   
and
\begin{eqnarray}
H(U\times \{1\}) & \subset & \psi\left(\gamma(U)\times \Theta(U,1)\right) \nonumber \\
& \subset & \psi\left(\overline B\times(\Omega Y)^d\right) \nonumber
\end{eqnarray}
which are both compact since the CW structure of $\Omega Y$ is of finite type. Note that $F$ and $\psi$ are determined by $B$. Letting 
$$ K' = F(K\times I) \cup \psi \left(\overline B\times (\Omega Y)^{\max\{d,m_K\}}\right)  $$
proves the lemma. 
\end{proof}

We are now ready to prove Theorem~\ref{thm:CalderSiegel}.
\begin{proof}[Proof of Theorem~\ref{thm:CalderSiegel}]
Let $\{B_i\}_{i=1}^m$ be a finite open covering of $Y\times Y$ such that for each $i$ we have that $p^{-1}(\overline {B_i})$ is homotopy equivalent to $\overline {B_i}\times \Omega Y$.
We may assume that  $\{U_j\}_{j=1}^\infty$ is an open covering of $X$ such that the following holds
\begin{itemize}
    \item[(1)] For each $j$ we have that $\overline{U_j}$ is compact, and there 
     is a trivialization of $W$ over $U_j$ such that $p\circ h_{U_j}(U_j)\subset B_{i_j}$ 
    for some $i_j\in \{1,\ldots,m\}$. 
    \item[(2)] There is a family of $d+1$ index sets $I_1,\ldots,I_{d+1}\subset \mathbb{N}$, such that if $i,j\in I_k$ for some $k$, then $U_i$ and $U_j$ are disjoint, and $\cup_{k=1}^{d+1}I_k=\mathbb N$. 
\end{itemize}

The first point above is obtained by first picking a family of trivialisations $\{(U_j,\rho_j)\}_{j\in\mathbb N}$ of $W$ such that 
$\overline{U_j}$ is compact for all $j$. 
 We may then further 
consider the refinement of $\{U_j\}_{j\in\mathbb N}$ given by $\{U_j\cap \left( p\circ h_{U_j}\right)^{-1}(B_i)\}_{i,j}$. By possibly passing to a refinement of this second covering, the second point can be obtained by applying Ostrand's theorem on coloured dimension (see Lemma~3 in \cite{Ost71}). 

Finally, let $\{V_j\}_{j=1}^\infty$ be an open covering of $X$ such that $\overline{V_j}\subset U_j$ for each $j$, and for each $k\in \{1,\ldots,d+1\}$ define 
$$ U^k = \bigcup_{i\in I_1\cup\ldots\cup I_k} U_i, \qquad V^k = \bigcup_{i\in I_1\cup\ldots\cup I_k} V_i. $$

Set $h^0:=h$ and $K_0=V^0=U^0=\emptyset$. We will proceed by induction. Assume that $h^{k-1}$ is a homotopy matching 
$h^0$ at $\{0,1\}$, and that 
$K_{k-1}\subset C(I,Y)$ is a $G$-invariant compact set such that for any $x\in V^{k-1}$ and any trivialization of $W$ in a neighbourhood $U$ of $x$, we have that

$h^{k-1}_U(x)\in K_{k-1}$. 
We will now find a homotopy $h^k$ matching $h^{k-1}$ at $\{0,1\}$ and a $G$-invariant compact set $K_k\subset C(I,Y)$ such that for any $x\in V^{k}$ and any trivialization of $W$ in a neighbourhood $U$ of $x$, we have that 
$h^{k}_U(x)\in K_{k}$.

By our choice of covering $\{U_j\}_j$, we have that $W$ is trivial over each $U_j$ and $p\circ h_{U_j}(U_j)\subset B_{i_j}$ for some $i_j\in \{1,\ldots,m\}$. In other words, $p:C(I,Y)\rightarrow Y\times Y$ is trivial over $p\circ h_{U_j}(U_j)$.

For each $j\in I_k$  we let 
$$ 
H_j:U_j\times I \rightarrow C(I:Y)
$$ 
be the fiber homotopy given by applying Lemma~\ref{lemma:DeformToSkeleton} to the map 
$$
h^{k-1}_{U_j}:U_j \rightarrow C(I:Y)
$$ 
and the set $A_k= U_j\cap V^{k-1}$. 

We get that 
$$
H_j(A_k\times I)\cup H_j(U_j\times\{1\})
$$ 
is contained in a compact set $K^{i_j}_k\subset C(I,Y)$ which is determined by $d$, $K_{k-1}$ and $B_{i_j}$ (note that $i_j\in \{1,\ldots,m\}$).

Let $\eta_k:X\rightarrow I$ be a continuous map such that 
$$ 
\eta_k^{-1}(1)=\overline{ \bigcup_{j\in I_k} V_j} \qquad \eta_k^{-1}(0)=X\setminus \bigcup_{j\in I_k} U_j. 
$$ 
For any $x\in X\setminus \bigcup_{j\in I_k} U_j$ we put $h^k(x):=h^{k-1}(x)$. 
As $U_l$ and $U_j$ are disjoint for any distinct $l,j\in I_k$ we get for any $x\in \bigcup_{j\in I_k} U_j$, a unique $j\in I_k$ such that $x\in U_{j}$.
 
We define $h^k(x)$ by

$$ 
h^k_{U_j}(x) = H(x,\eta_k(x))
$$
It follows that for any $x\in U_j\cap V^{k-1}$, we have that $h^k_{U_j}(x)$ is contained in 
$ H(U_j\cap V^{k-1}\times I)$ and for any $x\in V_j$, we have that $h^k_{U_j}(x)$ is contained in $H(V_j\times \{1\})$ both of which are contained in $K_k^{i_j}$. Performing this procedure for all $j\in I_k$ we get that for any $x\in V^k$ and any trivialization $(U,\rho)$ of $W$ in a neighbourhood $U$ of $x$, we have that $h_U^k(x)$ is in the compact set
$$ K_k := \bigcup_{i=1}^m GK_k^{i}, $$
where $GK_k^{i}$ denotes the $G$-orbit of $K_k^{i}\subset C(I,Y)$ which is compact by compactness of $G$. The theorem then follows by induction on $k\in \{1,\ldots,d+1\}$. In particular, $h^{d+1}$ is the desired uniform homotopy. 

\end{proof}

\section{Deformation of maps into CW Complexes}
\label{sec:CW-Complexes}

This goal of this section is to state and prove Proposition~\ref{prop:CWHomotopy} below, which is used in the previous section to prove Lemma~\ref{lemma:DeformToSkeleton}. We begin by recalling the definition of a CW complex.
\begin{itemize}
\item[(1)] Start with a discrete set $Y^0$, the zero cells of $Y$.
\item[(2)] Inductively, form the $n$-skeleton $Y^n$ from $Y^{n-1}$ by 
attaching $n$-cells $e_\alpha^n$ via maps 
$\varphi_\alpha:S^{n-1}\rightarrow Y^{n-1}$. This means that $Y^n$
is the quotient space of $X^{n-1}\sqcup_{\alpha} D_\alpha^n$ under 
the identifications $x\sim \varphi_\alpha(x)$ for $x\in\partial D^n_\alpha$.
The cell $e^n_\alpha$ is the homeomorphic image of $D^n_\alpha\setminus\partial D^n_\alpha$
under the quotient map. 
\item[(3)] $Y=\cup_n Y^n$ is equipped with the weak topology: A set $A\subset Y$ is open (or closed) if and only if 
$A\cap Y^n$ is open (or closed) in $Y^n$ for each $n$. 
\end{itemize}

The goal of this section is to prove the following proposition. The proposition is used in the proof of Lemma~\ref{lemma:DeformToSkeleton}. 

\begin{proposition}\label{prop:CWHomotopy}
Let $X$ and $Y$ be CW complexes, with $\mathrm{dim}(X)=d$, let $L\subset X$ be compact, and 
let $f:\Omega\rightarrow Y$ be a continuous map where $\Omega$ is an open set containing $L$.
Then there exists an open set $K\subset\Omega '\subset\Omega$ and 
a homotopy 
$\Theta: [0,1]\times\Omega'\rightarrow Y$ such that the following holds. 
\begin{itemize}
\item[(1)] $\Theta(x,0)=f(x)$ for all $x\in \Omega'$, 
\item[(2)] If $f(x)\in \overline{e^m_\alpha}$ then $\Theta(x,t)\in e^m_\alpha\cup Y^{m-1}\subset Y^m$ for all $t\in [0,1]$, 
\item[(3)] $\Theta(x,1)\in Y^d$ for all $x\in \Omega'$.
\end{itemize}
\end{proposition}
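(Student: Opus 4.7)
My plan is to prove Proposition~\ref{prop:CWHomotopy} by the classical cellular-approximation technique, organised cell by cell and dimension by dimension so that condition (2) is preserved throughout. The homotopy $\Theta$ will be constructed as a concatenation of partial homotopies, one for each $m = d+1, d+2, \ldots$: the $m$-th partial homotopy deforms the current map off the interiors of all $m$-cells and sends the corresponding preimages into $Y^{m-1}$. The successive partial homotopies will be assigned to shrinking time subintervals of $[0,1]$, with continuity at $t = 1$ guaranteed by the weak topology on $Y$: every compact subset $K \subset X$ has $f$-image contained in a finite subcomplex of $Y$, so only finitely many stages do anything nontrivial over $K$.

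The core local step is a push-off for a single cell $e^m_\alpha$ with $m > d$. Using the characteristic map $\Phi_\alpha \colon D^m \to \overline{e^m_\alpha}$, one homotopes the current map via a deformation supported in a neighbourhood of $f^{-1}(\overline{e^m_\alpha})$ and staying in $\overline{e^m_\alpha}$ throughout, so that the resulting map's image misses a chosen interior point $p^m_\alpha \in e^m_\alpha$. This is the classical lemma underlying cellular approximation: one works on a sequence of concentric disks in $D^m$ around $p^m_\alpha$, smoothly approximates $\Phi_\alpha^{-1} \circ f$ on each annulus (available on compact subsets of $X$ because $X$ is a smooth manifold), and invokes Sard's theorem together with the dimension inequality $d < m$ to obtain a regular value near the centre that is not hit by the approximation. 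After the image has been pushed off $p^m_\alpha$, a radial retraction of $\overline{e^m_\alpha} \setminus \{p^m_\alpha\}$ onto $\partial \overline{e^m_\alpha} \subset Y^{m-1}$, together with the straight-line homotopy from the identity to this retraction, completes the $m$-stage for that cell. Because the entire stage is confined to $\overline{e^m_\alpha}$, condition (2) is preserved for this cell; because different $m$-cells have disjoint interiors, the modifications for all of them can be carried out simultaneously.

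Once all stages are assembled, condition (1) is immediate and condition (3) follows because at $t = 1$ the image lies in $Y^d$. Condition (2) is verified by tracing trajectories: if $f(x) \in \overline{e^m_\alpha}$, then stage $m$ only moves $x$ within $\overline{e^m_\alpha}$, and once the image lies in $Y^{m-1}$ the subsequent stages move it only within $Y^{m-1} \subset e^m_\alpha \cup Y^{m-1}$. The principal difficulty I anticipate is not any single ingredient --- each is classical --- but the careful organisation of the concatenation in a potentially infinite-dimensional CW complex: the shrinking time subintervals must be arranged so that on every compact $K \subset X$ the deformation is eventually constant, continuity at $t = 1$ must be verified using the weak topology of $Y$, and the deformation in a high-dimensional cell must be continued coherently by deformations in the lower-dimensional cells into whose closures its endpoints have landed. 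The local smoothing-and-Sard step, while classical, is the technical heart and uses bump functions to confine each modification to the preimage of the relevant cell.
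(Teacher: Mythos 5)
Your core technique is exactly the paper's: smooth a local representative of the map in a cell interior using a bump function, apply Sard's theorem with the dimension inequality $d<m$ to find a missed interior point, push radially to the boundary, and glue these local deformations together; over a compact piece of $X$ the image lies in a finite subcomplex, and the infinite concatenation is handled by shrinking time intervals. So the ingredients are right.

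The genuine gap is the organisation of the stages. You enumerate them by increasing cell dimension $m=d+1,d+2,\ldots$, with the $m$-th stage pushing preimages of $m$-cells into $Y^{m-1}$. But a point whose $f$-image lies in an $m$-cell with $m\geq d+2$ will, at stage $m$, be pushed only as far as $Y^{m-1}$, and by then the stages for dimensions $d+1,\ldots,m-1$ have already elapsed; nothing in your plan pushes it any further, so condition (3) fails. You do flag this difficulty in your final sentence (``\ldots must be continued coherently by deformations in the lower-dimensional cells into whose closures its endpoints have landed''), but the dimension-ordered schedule you set up gives no room for that continuation, and simply reversing the order to go from high $m$ to low $m$ is not an option either, since $Y$ may be infinite-dimensional. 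The paper resolves this by organising the induction over a compact exhaustion $K_1\subset K_2\subset\cdots$ of $X$ rather than over cell dimension: for each $n$ one observes that the current image of $K_n$ lies in some finite skeleton $Y^{s_n}$, and over the time subinterval $[1-1/n,\,1-1/(n+1)]$ one carries out the push-off $s_n-d$ times, descending from $Y^{s_n}$ to $Y^{s_n-1}$, then to $Y^{s_n-2}$, and so on down to $Y^d$, while keeping the homotopy constant on the preimage of $Y^d$ so that already-processed points are undisturbed. With that bookkeeping, after stage $n$ the image of $K_n$ is genuinely in $Y^d$, and the concatenation converges in the weak topology. You should restructure your argument along these lines; the single-cell push-off you describe is fine as the local step of that induction.
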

\begin{remark}
Proposition~\ref{prop:CWHomotopy} is closely related to Theorem~1 on page 199 in \cite{Sch68} stating that any map between CW-complexes is homotopic to a cellular mapping. However, we have not found a specific reference when it comes to compact subsets of CW-complexes, and not an explicit statement analogous to (2). 
\end{remark}

Let $X$ and $Y$ be CW-complexes. Note that for a cell $e^n_{X,\alpha}\subset X$, a cell $e^m_{Y,\beta}\subset Y$, 
and a closed set $S\subset e^n_{X,\alpha}$ there is a well defined notion of smooth maps from $S$ into 
$e^m_{Y,\beta}$ via the homeomorphisms induced by the quotient maps. (By convention, 
if $S\subset\mathbb R^n$ and if $f:S\rightarrow\mathbb R^m$ is a map, we say that $f$ is smooth 
if for any point $x\in S$ there is an open neighbourhood $U_x$ of $X$ in $\mathbb R^n$ and 
a smooth map $\tilde f:U_x\rightarrow\mathbb R^m$ with $\tilde f|_{S\cap U_x})=f$.)

\medskip

Since each $e^n_\alpha$ naturally identified with $D^n_\alpha$ we may use this identification to 
consider convex sets in $e^n_\alpha$, and more generally balls $rB^n_\alpha\subset e^n_\alpha$ of radius 
$r$ centred at the origin.

\begin{definition}
Let $X$ and $Y$ be CW complexes, and let $S\subset X$. 
We say that a continuous map $f:X\rightarrow Y$ is smooth 
at $x\in S$, if $x\in e^n_{X,\alpha}, f(x)\in e^m_{Y,\beta}$, and 
if $f$ is smooth on $U_x\cap S$ for some open neighbourhood of $U_x$ of $x$.   
We say that $f$ is smooth on $S$ if $f$ is smooth at each $x\in S$. 
\end{definition}

We now give a proof of Proposition \ref{prop:CWHomotopy} granted Proposition \ref{prop:smooth}
which we will prove below. 

\emph{Proof of Proposition \ref{prop:CWHomotopy}:}
Since $L$ is compact we have that $f(L)$ intersects finitely many cells in $Y$ and 
in particular the set of such cells $e^m_{Y}$ with $m>d$ is finite. 
Order these cells in a way $e^{m_j}_Y$, $j=1,...,N$ such that $m_{j+1}\leq m_j$
for all $j$. We proceed by induction on $j$. 

Choose a compact convex set $K_1\subset e^{m_1}_Y$ with interior, and 
an open set $U_1$ containing $K_1$. 
Applying Proposition \ref{prop:smooth} we may assume that $f$ is smooth on $f^{-1}(K_1)$
and so by Sard's Theorem, we may assume that 
there is a point in $e^{m_1}_Y$ which is not in $f(X)$, and identifying $e^{m_1}_Y$
with $D^{m_1}$ we may assume that this point is the origin. In $D^{m_1}$
we define 
$$
\psi_t(x):=\frac{x}{\|x\|}(\|x\| + t(1-\|x\|)),
$$
and we set $f_t=\psi_t\circ f$ on $f^{-1}(e^m_Y)$, and $f_t=f$ elsewhere.  
Repeating this construction, for $j=2,3,..$, and 
concatenating gives the proposition. 
$\hfill\square$

\begin{proposition}\label{prop:smooth}
Let $X$ and $Y$ be CW complexes, let $L\subset X$ be compact,  
and let $f:\Omega\rightarrow Y$ 
be a continuous map where $\Omega$ is an open set containing $L$. 
Let $e^m_{Y,\alpha}$ be a cell in $Y$, 
let $K\subset e^m_{Y,\alpha}$ be a compact convex set, and let $U\subset e^m_{Y,\alpha}$
be an open set containing $K$. 
Then there exists a an open set $L\subset\Omega'\subset\Omega$ and a 
homotopy $F:[0,1]\times \Omega'\rightarrow Y$ with $F_0=f$ and $F_1$ 
smooth on $F_1^{-1}(K)$. 
Moreover, we have that $F_t=f$ on $\Omega'\setminus f^{-1}(U)$, $F_t(f^{-1}(U))\subset U$,  
and  if $f(x)\in Y^n$ for any $x\in X$, then $F_t(x)\in Y^n$ for all $t\in [0,1]$. 
\end{proposition}
\begin{proof}
Without loss of generality we may assume that $U$ is convex. 
We proceed by induction on $n$, and our induction hypothesis $I_n$ is that we have constructed
an isotopy $F^n_t$ with the desired properties near, in the induced topology on $X^n$, 
$L_n=L\cap X^n$, where $X^n$ denotes the $n$-skeleton of $X$, and moreover, setting 
$$
E_n=\overline{\{x\in X^n:F^n_t(x)\neq f(x) \mbox{ for some } t\}}, 
$$
we have that $\overline{F^n_t(E_n)}\subset U$.  Then $I_0$ holds automatically setting $F^0_t=f$ for all $t$. 
Now,  $L_{n+1}$ is constructed from $L_n$ by attaching intersections $L\cap e^{n+1}_{X,\beta}$ for $(n+1)$-cells $e^{n+1}_{X,\beta}$ with $\beta\in J_{n+1}$.
For a fixed $\beta$, we let $\tilde f_t$ denote the restriction of $F^n_t$ to an open neighbourhood of $L\cap\partial e^{n+1}_{X,\beta}$. Then 
Lemma \ref{homext} below applies to extend the homotopy $\tilde f_t$ to $\overline{e^{n+1}_{X,\beta}}$, in a 
way such that Lemma \ref{homsmooth} applies to homotopize further to get a homotopy $F^{(n+1),\beta}_t$ satisfying 
the desired differentiability conditions, as well as $\overline{F^{(n+1),\beta}_t}(E_{n+1}^\beta)\subset U$. 

\end{proof}

\begin{lemma}\label{homext}
Let $e^n_{X,\beta}$ be a cell in $X$, let $L\subset\overline{e^n_{X,\beta}}$ be a closed set, and let $\Omega\subset\overline{e^n_{X,\beta}}$
be an open set containing $L$. 
Let $e^m_{Y,\alpha}$ be a cell in $Y$, let $U\subset\subset e^m_{Y,\alpha}$ be an open convex set, and let 
$$
f:\Omega\rightarrow Y
$$
be a continuous map. 
Let $\tilde f_t$ be a homotopy of continuous maps 
$$
\tilde f_t:[0,1]\times (\Omega\cap\partial e^n_{X,\beta})\rightarrow Y
$$
with $\tilde f_0= f$, set 
$$
E=\overline{\{x\in (\Omega\cap\partial e^n_{X,\beta}):f_t(x)\neq f(x) \mbox{ for some } t\}}, 
$$
and assume that $\overline{\tilde f(E)}\subset U$. 
Then 
there exists an open set $L\subset\Omega'\subset\Omega$ and 
a homotopy of continuous maps 
$$
F:[0,1]\times \Omega'\rightarrow Y
$$ 
with $F|_{\Omega'\cap\partial e^n_{X,\beta}}=\tilde f$,  $F_t=f$ on $\Omega'\setminus f^{-1}(U)$, and 
$F_t(f^{-1}(U))\subset U$. 
\end{lemma}
\begin{proof}
We have attaching maps $\varphi_\alpha:\partial D_\alpha^m\rightarrow Y$ and $\varphi_\beta:\partial D_\beta^n\rightarrow X$, 
and by construction of a CW-complex, both are regarded to extend as homeomorphisms on the interior 
of the respective balls.  Abusing notation we let $L$ denote $\varphi_\alpha^{-1}(L)$ and $\Omega$ denote $\varphi_\alpha^{-1}(\Omega)$, 
and interpret $f$ as a continuous map 
$$
f:\overline D_\beta^n\rightarrow Y, 
$$
and $\tilde f_t$ as an isotopy $\tilde f_t:\partial D^n\rightarrow Y$, with $E\subset\partial D_\beta^n$. 
Moreover, near $E$, $\tilde f_t$ may be thought of as a map into the convex open set $U\subset D_\alpha^m$. \

Let $W\subset\partial D_\beta^n$ be an open set with $E\subset W$ such that 
$\tilde f_t(W)\subset\subset U$ for all $t$, and choose $\delta>0$ 
such that 
$f(W_\delta)\subset\subset U$, where 
$$
W_\delta=\{x\in\overline D_\beta^n: \|x\|>1-\delta, \frac{x}{\|x\|}\in W\}
$$
Let $\chi_\delta(x)=\chi_\delta(\|x\|)$ be in $C(\overline{D_\beta^n})$, with $0\leq \chi_\delta\leq 1$, 
$\mathrm{Supp}(\chi_\delta)\subset \{1-\delta<\|x\|\leq 1\}$, and $\chi_\delta(x)=1$ for $\|x\|=1$. Define 
$$
F_t(x) = \chi_\delta(x)\cdot\tilde f_t(\frac{x}{\|x\|}) + (1-\chi_\delta(x))\cdot f(x)
$$
for $x\in W_\delta$, and $F_t=f$ on an open neighbourhood of $L\setminus W_\delta$.
\end{proof}

\begin{lemma}\label{homsmooth}
Let $e^n_{X,\beta}$ be a cell in $X$, let $L\subset\overline{e^n_{X,\beta}}$ be a closed set, and let $\Omega\subset\overline{e^n_{X,\beta}}$
be an open set containing $L$. 
Let $e^m_{Y,\alpha}$ be a cell in $Y$, let $K\subset e^m_{Y,\alpha}$ be a compact convex set, and 
let 
$$
f:\Omega\rightarrow Y
$$
be a continuous map.  Let $U\subset\subset e^m_{Y,\alpha}$
be a convex open set with $K\subset U$.
Then there exists an open set $L\subset\Omega'\subset\Omega$ and 
a homotopy 
$$
F:[0,1]\times \Omega'\rightarrow Y
$$
of continuous maps such that 
\begin{itemize}
\item[(i)] $F_0=f$, 
\item[(ii)] $F_t=f$ on $\Omega'\cap\partial e^n_{X,\beta}$ for all $t\in [0,1]$, and 
\item[(iii)] $F_1$ is smooth on $F_1^{-1}(K)$. 
\end{itemize}
Moreover, setting 
$$
E=\overline{\{x\in\Omega':F_t(x)\neq f(x) \mbox{ for some } t\}}, 
$$
we have that 
\begin{itemize}
\item[(iv)] $\overline{F(E)}\subset U$. 
\end{itemize}
\end{lemma}
\begin{proof} 
Set $\tilde U=f^{-1}(U)$, let $W\subset\subset U$ be a convex open set with $K\subset W$, 
and set $\tilde W=f^{-1}(W)$.  
Let $\chi$ be a smooth function on $\overline {D_\beta^n}$ with 
$\mathrm{Supp}(\chi)\subset\tilde W, 0\leq\chi\leq 1$ and $\chi\equiv 1$ near $f^{-1}(K)$. 
Let $f_j$ be a sequence of continuous functions on $\overline{\tilde W}$ such that each $f_j$ 
is smooth on $\tilde W\cap B^n$, such that $f_j\rightarrow f$ uniformly on $\tilde W$, 
and such that $f_j=f$ on $\tilde W\cap\partial B^n$.  Set 
$$
F_j = f + \chi\cdot (f_j - f)
$$
on $\tilde W$, and $F_j=f$ on $\overline D^n\setminus\tilde W$. 
Finally we set $F_{j,t}(x)= (1-t)\cdot f + t\cdot F_j$ and observe that if we set $F_t=F_{j,t}$ for large 
enough $j$, then the conclusion of the lemma is satisfied. 
\end{proof}

\section{Proofs of the main theorems}\label{sec:Proofs}

\subsection{Vector bundle automorphisms as sections of fiber bundles}
Let $W^{\mathrm{GL}}$ be the fiber bundle over $X$ consisting of the points in the endomorphism bundle $V\otimes V^*$ that represent invertible maps on the fibers of $V$. 



We also get the following fiber bundles over $X$: 
\begin{itemize}
\item[(1)] $W^{\mathrm{SL}}$ denoting the fiber bundle over $X$ consisting of the elements $W^{\mathrm{GL}}$ that has determinant one. 
\item[(2)] $W^{\mathrm{u}}$ denoting the fiber bundle over $X$ consisting of the (fiberwise) \emph{unipotent} elements $W^{\mathrm{GL}}$. 
\end{itemize} 
If $U\subset X$ is an open set and $W$ any of the fiber bundles above we denote the space of continuous 
sections of $W|_U$ by $\Gamma(U,W)$. Note that a vector bundle automorphism $S$ of $V$ defines a section in $W^{\mathrm{GL}}$. In the rest of the paper we will identify $S$ with this section and hence consider $S$ an element in $\Gamma(X,W^{\mathrm{GL}})$. 
\begin{remark}
Note that a vector bundle automorphism $S$ is unipotent (i.e. $(S-\mathrm{Id})^m=0$ for some positive integer $m$) if and only if the associated section of $W^{\mathrm{GL}}$ lies in $W^{\mathrm{u}}$. This follows from the elementary fact that any unipotent automorphism $s$ of a vector space of dimension $k$ satisfies $(s-\mathrm{Id})^k=0$. 
\end{remark}
\begin{remark}
\label{rem:ProdOfExp}
The fiber bundles $W^{\mathrm{GL}}$ and $W^{\mathrm{SL}}$ are bundles of Lie groups. The fiberwise Lie algebras of these bundles form bundles of Lie Algebras $W^{\mathfrak{gl}}$ and $W^{\mathfrak{sl}}$ with exponential maps
\begin{equation}
   W^{\mathfrak{gl}} \xrightarrow{\mathrm{exp}} W^{\mathrm{GL}} \textnormal{ and } W^{\mathfrak{sl}} \xrightarrow{\mathrm{exp}} W^{\mathrm{SL}}
\end{equation}
(see for example \cite{SilvaWeinstein}, Section 16.3). Composing with the exponential map we get that any section $E$ of $W^{\mathfrak{sl}}$ or $W^{\mathfrak{gl}}$ defines a section $\mathrm{exp}(E)$ of $W^{\mathrm{SL}}$ or $W^{\mathrm{GL}}$.
It is thus natural to ask the following questions generalizing questions treated in  \cite{DK19,KS19,MR18}: Given a nullhomotopic section $S$ of $W^{\mathrm{SL}}$ or $W^{\mathrm{GL}}$, is it possible to find sections $E_1,\ldots,E_N$ of $W^{\mathfrak{sl}}$ or $W^{\mathfrak{gl}}$ such that
$$ S = \mathrm{exp}(E_N)\circ \ldots \circ \mathrm{exp}(E_1).$$
In fact, any unipotent automorphism is the image under the exponental map of a section of the corresponding bundle of Lie algebras. More precisely, putting 
$$ s = -(\mathrm{Id}-S) - \frac{(\mathrm{Id}-S)^2}{2} - \ldots - \frac{(\mathrm{Id}-S)^{k-1}}{k-1} $$
gives (using $(S-\mathrm{Id})^k=0$) that $\mathrm{exp}(s)=S$ (see also Lemma~1 in~\cite{DK19}). Consequently, Theorem~\ref{thm:1} -- Theorem~\ref{thm:RealSymp} have consequences for the problem of exponential factorization.
\end{remark}

\subsection{Auxiliary metrics and local frames for Theorem~\ref{thm:1} and Theorem~\ref{thm:2}} \label{sec:AuxiliaryMetrics}

In the case when $V$ is a real vector bundle we fix an inner product on $V$. We may then 
choose the local trivialisations $\psi_j:\pi^{-1}(U_j)\rightarrow U_j\times\mathbb R^k$
such that the pullback of the inner product under $(\psi_j)$ is the standard Euclidean on $\mathbb R^k$ for all $x\in U_j$. In that case the transition functions
$\psi_{ij}$ all take values in the orthogonal group $\mathrm{O}_k$. Similarly, if $V$ is a complex vector bundle we fix a Hermitian form on $V$ and pick trivialisations such that the transition functions take values in the unitary group $\mathrm{U}_k$.

These are both compact groups, in other words $W^{\mathrm{GL}}$ becomes a fiber bundle with compact structure group.  In the proof of Theorem \ref{thm:1} we 
will always assume that $V$ is equipped with this structure. This will be crucial when applying Theorem~\ref{thm:CalderSiegel}. Moreover, the auxiliary metric above also give us two fiber bundles over $X$. In the case when $V$ is a real vector bundle: 
\begin{itemize}
    \item[(3)] $W^{\mathrm{SO}}$, denoting the fiber bundle over $X$ consisting of the elements in $W^{\mathrm{SL}}$ that are orthogonal with respect to the inner product on $V$
\end{itemize}
and in the case when $V$ is a complex vector bundle:
\begin{itemize}
    \item[(3)] $W^{\mathrm{SU}}$,  denoting the fiber bundle over $X$ consisting of the elements in $W^{\mathrm{SL}}$ that are unitary with respect to the Hermitian form on $V$
\end{itemize}
The significance of these fiber bundles is that their fibers are compact. This will also be crucial when applying Theorem~\ref{thm:CalderSiegel}.

\medskip

For $f\in\Gamma (X,W^{\mathrm{GL}})$, the inner product on $V$ (resp. the Hermitian form) induce a fiberwise operator norm 
$$
\|f\|_x:=\sup_{v\in V_x\setminus\{0\}}\frac{|f(x)(v)|}{|v|},
$$
where $|\cdot|$ is the norm induced by the inner produce (resp. Hermitian form), and a distance function $d_W:\Gamma(X,W^{\mathrm{GL}})\times\Gamma(X,W^{\mathrm{GL}})\rightarrow [0,\infty]$
by 
$$
d_W(f,g):=\sup_{x\in X}\|f-g\|_x.
$$
The space of bounded sections, i.e., $f\in\Gamma(X,W^{\mathrm{GL}})$ such that 
$$\|f\|_W :=d_W(f,0)<\infty,$$ 
is thus 
given the structure of a normed space which we denote by $\Gamma_b(X,W^{\mathrm{GL}})$. In particular, $\Gamma(X,W^{\mathrm{SO}})$ (and $\Gamma(X,W^{\mathrm{SU}})$ respectively) is naturally included in $\Gamma_b(X,W^{\mathrm{GL}})$.\

\medskip

If in addition $V$ admits a splitting $\mathcal L=\{L_1,...,L_k\}$ into line bundles, we may 
choose the 
inner product (or Hermitian form, respectively)
such that the $L_i$'s are mutually orthogonal. We may also achieve that each $\psi_j$ sends each $L_i$ to the line generated by the $i$'th standard basis vector in $\mathbb R^n$ ($\mathbb C^n$, respectively). Then the transition maps 
$\psi_{ij}(x)$ are diagonal matrices whose entries in the real case take values in $\{-1,1\}$, and 
in the complex case in $\{z\in\mathbb C:|z|=1\}$. This will be used in the proof of Theorem~\ref{thm:2}.\

\subsection{Gauss-Jordan, Gram-Schmidt and the proofs of Theorem~\ref{thm:1} and Theorem~\ref{thm:2}}

The main ingredients in the proofs of Theorem~\ref{thm:1} and Theorem~\ref{thm:2} are
Theorem~\ref{thm:CalderSiegel} on uniform homotopies together with Proposition~\ref{splitstep1} and Proposition~\ref{splitstep2}. Proposition~\ref{splitstep1} is related to the Gram-Schmidt process and says that a section in $W^{\mathrm{SL}}$, or more generally a continuous family of sections $S_t, t\in [0,1]$ of $W^{\mathrm{SL}}$ can after composing with a number of unipotent sections be assumed to take values in $W^{\mathrm{SO}}$ (or $W^{\mathrm{SU}}$, respectively). 

\begin{proposition}[The Gram-Schmidt process]\label{splitstep1}
Let $S_t, t\in [0,1]$ be a homotopy of sections in $\Gamma(X,W^{\mathrm{SL}})$. Then there exist 
homotopies of unipotent sections 
$E^t_j, j=1,...,N$ in $\Gamma(X,W^{\mathrm{u}})$ with $(\mathrm{id}-E^t_j)^2=0$ and $\mathrm{rank}(\mathrm{id}-E^t_j)\in\{0,1\}$, 
such that in the case when $V$ is a real vector bundle
$$
E^t_N\circ\cdot\cdot\cdot\circ E^t_1\circ S_t\in \Gamma(X,W^{\mathrm{SO}})
$$ 
and in the case when $V$ is a complex vector bundle
$$
E^t_N\circ\cdot\cdot\cdot\circ E^t_1\circ S_t\in \Gamma(X,W^{\mathrm{SU}})
$$ 
for all $t\in [0,1]$. Moreover, if $S_0=\mathrm{id}$ then $E^0_j=\mathrm{id}$
for $j=1,...,N$.
\end{proposition}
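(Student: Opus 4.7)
The plan is to perform fibrewise Gram-Schmidt in local frames and then glue the resulting factorizations globally using a triangulation of $X$ together with the inductive step provided by Lemma~\ref{GS}.

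Fix a locally finite triangulation of $X$ and, for each closed simplex $\sigma$, a neighborhood $U_\sigma\supset\sigma$ endowed with an orthonormal trivialization of $V$ for the auxiliary metric (resp.\ Hermitian form) of Section~\ref{sec:AuxiliaryMetrics}. In such a trivialization the homotopy $S_t$ is a continuous family of $SL_k$-valued maps on $U_\sigma\times[0,1]$. Pointwise Gram-Schmidt of the columns of $S_t(x)$ yields a continuous factorization $S_t(x)=Q_t(x)R_t(x)$ with $R_t(x)$ upper triangular having positive real diagonal and $Q_t(x)\in SO_k$ (resp.\ $SU_k$); positivity of the diagonal of $R_t$ together with $\det S_t=1$ forces $\det R_t=\det Q_t=1$, and $S_t=\mathrm{id}$ gives $Q_t=R_t=\mathrm{id}$. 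Since $R_t^{-1}$ is again upper triangular with positive diagonal and unit determinant, split $R_t^{-1}=N_tD_t$ with $N_t$ upper triangular unipotent (itself a unipotent factor) and $D_t$ positive diagonal of determinant one, and further write $D_t$ as a telescoping product of two-entry diagonals $\mathrm{diag}(1,\dots,\lambda,\lambda^{-1},\dots,1)$. Each such block admits a continuous factorization into finitely many elementary unipotents via a standard $SL_2$-identity, which can be arranged so that the elementary parameters vanish when $\lambda=1$, reducing to the trivial factorization in that case. This yields on each $U_\sigma$ a continuous family of unipotent local factors $E^{t,\sigma}_1,\ldots,E^{t,\sigma}_{N_\sigma}$ whose composition with $S_t$ lands in $W^{\mathrm{SO}}$ (resp.\ $W^{\mathrm{SU}}$), and which reduce to the identity whenever $S_t$ does.

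To globalize, I induct on the dimension of the skeleton via Lemma~\ref{GS}. Suppose global unipotent sections $E^t_1,\ldots,E^t_M$ have been constructed on a neighborhood of the $(n-1)$-skeleton so that $E^t_M\circ\cdots\circ E^t_1\circ S_t\in\Gamma(X,W^{\mathrm{SO}})$ (resp.\ $\Gamma(X,W^{\mathrm{SU}})$) there. For each $n$-simplex $\sigma$, the local Gram-Schmidt output on $U_\sigma$ and the factorization inherited on $\partial\sigma$ differ by a section taking values in the group of upper triangular matrices with positive diagonal and determinant one; this group is contractible, so Lemma~\ref{GS} extends the factorization across $\sigma$ by inserting finitely many additional continuous unipotent factors that depend continuously on $t$ and reduce to the identity wherever the input does. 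Iterating through $n=0,1,\ldots,\dim X$ (finite since $X$ is finite-dimensional) produces the required global unipotent sections $E^t_1,\ldots,E^t_N$.

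The main obstacle is this globalization step: Gram-Schmidt depends essentially on the choice of frame, so two adjacent trivializations produce factorizations whose difference is a nontrivial triangular correction, and the real content of Lemma~\ref{GS} is to absorb such corrections continuously into additional unipotent factors using the contractibility of the relevant triangular subgroup of $SL_k$. Continuity in $t$ and the vanishing condition $E^0_j=\mathrm{id}$ when $S_0=\mathrm{id}$ then follow automatically from the continuity and identity-preserving nature of each step of the construction.
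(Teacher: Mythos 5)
Your overall strategy — local Gram--Schmidt in orthonormal frames, then globalize by inducting over the skeleton of a triangulation via Lemma~\ref{GS} — is the same skeleton the paper uses; the paper's proof of Proposition~\ref{splitstep1} is literally ``iterate Lemma~\ref{GS} for $q=-1,0,\ldots,\dim X$.'' Your local decomposition ($QR$ with positive diagonal, then splitting $R_t^{-1}$ into a unipotent $N_t$, a telescoping diagonal product, and $SL_2$-block unipotent identities arranged to vanish at the identity) is a perfectly reasonable way to spell out what ``the Gram--Schmidt process'' means.

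Where your proposal departs from what actually makes the argument work is the gluing step, and as stated your version would not go through. You claim that the local Gram--Schmidt output on $U_\sigma$ and the factorization inherited from $\partial\sigma$ ``differ by a section taking values in the group of upper triangular matrices with positive diagonal and determinant one,'' and appeal to contractibility of that group. But the two orthonormal frames on overlapping patches are related by an orthogonal (resp.\ unitary) transition function; conjugation by such a matrix does not preserve the triangular subgroup, so the discrepancy between two local factorizations is not a triangular section — generically it is an orthogonal/unitary one, and $SO_k$ is not contractible, so there is no contractibility argument to invoke there. The mechanism the paper actually uses in Lemma~\ref{GS} is different and more rigid: because the $(q+1)$-neighborhoods $V_k$ are chosen so that their pairwise overlaps $V_l\cap V_k$ lie inside the region where the previous inductive step has already made $S_t$ orthogonal (resp.\ unitary), and because Gram--Schmidt applied to an already orthogonal (unitary) matrix returns trivial factors, the off-diagonal entries $h^{k,t}_j$ of all locally produced elementary matrices are \emph{identically zero} on the overlaps. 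There is thus no discrepancy to absorb; one simply multiplies each $h^{k,t}_j$ by a cutoff function $\chi_k$ compactly supported in $V_k$ (which keeps the factor unipotent and continuous in $t$), and the resulting factors patch into global sections of $W^{\mathrm u}$. The only ``convexity'' used is the trivial fact that scaling the off-diagonal entry of a shear by $\chi\in[0,1]$ again gives a shear and gives the identity when the entry vanishes. Replacing your contractibility heuristic by this vanishing-on-overlaps observation is the step you are missing; the rest of your argument, including $E^0_j=\mathrm{id}$ whenever $S_0=\mathrm{id}$, then follows as you describe.
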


Proposition~\ref{splitstep2} is related to the Gauss-Jordan process and says that any $S\in \Gamma_b(X,W^{\mathrm{SL}})$ which is sufficiently close to the identity can be factorised into a product of unipotent sections. 

\begin{proposition}[The Gauss-Jordan process]\label{splitstep2}
There exists an $\epsilon>0$ such that if $S\in\Gamma_b^{\mathrm{SL}}(X,W)$
satisfies $d_W(S,\mathrm{id})<\epsilon$, then there exist 
unipotent sections 
$E_j\in\Gamma^{\mathrm{u}}(X,W)$ for $j=1,...,N$, such that
$$
E_N\circ\cdot\cdot\cdot\circ E_1\circ S=\mathrm{id}.
$$ 

\end{proposition}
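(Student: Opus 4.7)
The plan is to implement Gauss--Jordan elimination locally in trivializations, using a triangulation of $X$ to patch together the resulting elementary factors into genuine unipotent vector bundle automorphisms. The strategy parallels the inductive scheme used for the Gram--Schmidt Proposition~\ref{splitstep1}: first produce a canonical factorization with continuously varying coefficients in each trivializing chart, then glue along the skeleta of a triangulation in the manner of Lemma~\ref{GJ}.

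First I would fix a smooth triangulation $\mathcal T$ of $X$ sufficiently fine that $V$ is trivial over an open neighborhood of every closed simplex. Relative to such a trivialization, $S$ is represented locally by an $SL_k$-valued map $F$ with $\|F - \mathrm{id}\| < \epsilon$. For $\epsilon$ small enough, Gauss--Jordan elimination produces a canonical factorization $F = LDU$, with $L$ lower unipotent, $U$ upper unipotent, and $D$ diagonal of determinant $1$; each factor depends smoothly on $F$. The triangular factors $L$ and $U$ are visibly products of elementary matrices $\mathrm{id} + \alpha_{ij}(x) e_{ij}$. The factor $D$, being close to $\mathrm{id}$ and of determinant one, can in turn be written as a bounded-length product of elementary matrices whose coefficients are explicit continuous functions of the diagonal entries of $D$, via the standard identity decomposing $\mathrm{diag}(a,a^{-1})\in SL_2$ into four elementary factors, iterated over consecutive coordinate pairs.

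Next I would globalize by induction on the dimension of the faces of $\mathcal T$. At stage $d$, assume $S$ has already been composed with finitely many unipotent sections so that it equals the identity on a neighborhood of the $d$-skeleton. On each $(d+1)$-simplex $\sigma$, the remaining $S$ is close to the identity in the trivialization over $\sigma$ and equals the identity on a neighborhood of $\partial \sigma$. Multiplying each coefficient $\alpha_{ij}$ produced by the local Gauss--Jordan procedure by a bump function supported in $\sigma$ and equal to $1$ on the set where $S \neq \mathrm{id}$ yields elementary sections that agree with the identity on $\partial \sigma$ and therefore extend by the identity to all of $X$ as honest unipotent vector bundle automorphisms. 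Composing $S$ with these kills it on a neighborhood of the $(d+1)$-skeleton. After $\dim X + 1$ stages the section has been reduced to the identity, providing the required factorization.

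The main obstacle I anticipate is bookkeeping the operator norm through the induction. Interpolating the elementary factors against the identity by bump functions enlarges the intermediate products, so the partially reduced section at each higher skeleton must still lie within the domain of the canonical Gauss--Jordan factorization. I expect $\epsilon$ must be chosen relative to the combinatorics of $\mathcal T$, in particular the maximal valence of vertices and the number of simplices meeting along any face, so that the cumulative norm bounds stay below the threshold where the local factorization is defined and smoothly varying. Once this quantitative control is in place, the construction terminates in finitely many steps and yields unipotent sections $E_1,\ldots, E_N$ with $E_N \circ \cdots \circ E_1 \circ S = \mathrm{id}$.
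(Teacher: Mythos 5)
Your proposal follows essentially the same strategy as the paper's: the paper proves Proposition~\ref{splitstep2} by iterating Lemma~\ref{GJ} over the skeleta of a triangulation subordinate to a trivializing cover, performing Gauss--Jordan elimination in local frames and globalizing the resulting elementary factors by cutoff functions, exactly as you describe. Your concern about controlling the operator norm through the induction corresponds to the continuity clause in Lemma~\ref{GJ} ($\Lambda$ continuous with $\Lambda(\mathrm{Id})=(\mathrm{Id},\dots,\mathrm{Id})$), which lets $\epsilon$ be shrunk at each skeletal stage so that the partially reduced section stays in the domain where the local factorization is defined.
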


Given Proposition~\ref{splitstep1} and Proposition~\ref{splitstep2}, Theorem~\ref{thm:1} and Theorem~\ref{thm:2} are proved in the following manner:
\medskip

\emph{Proof of Theorems  \ref{thm:1} and Theorem \ref{thm:2}:}
Let $S_t, t\in [0,1]$, be a homotopy between $\mathrm{id}$ and $S$. By Proposition \ref{splitstep1}
we may assume that $S_t\in\Gamma(X,W^{\mathrm{SO}})$ for all $t\in [0,1]$ if $V$ is a real vector bundle and $S_t\in\Gamma(X,W^{\mathrm{SU}})$ for all $t\in [0,1]$ if $V$ is a complex vector bundle. By the discussion in Section~\ref{sec:AuxiliaryMetrics}, $W^{\mathrm{SL}}$ has compact structure group. To apply Theorem~\ref{thm:CalderSiegel} it thus suffices to verify that $W^{\mathrm{SO}}$ and $W^{\mathrm{SU}}$ have compact fibers with finite first fundamental group. But this is immediate since the fibers are isomorphic to $SO_k$ and $SU_k$ respectively, and $\pi_1(SO_k)$ is finite for $k\geq 3$ and $\pi_1(SU_k)$ is finite for any $k$. 
After applying Theorem~\ref{thm:CalderSiegel} (or rather its corollary) we may assume that the homotopy $S_t$ is uniform. 
Hence, there is a partition $0=t_0<t_1<\cdot\cdot\cdot <t_{m}=1$, such that $\tilde S_j:=S_{t_{j+1}}\circ S_{t_j}^{-1}$
satisfies $d_W(\tilde S_j,\mathrm{id})<\epsilon$ for $j=0,...,m-1$, where $\epsilon$ is the constant 
from Proposition \ref{splitstep2}. Since
$$ S = \tilde S_{m-1}\circ \tilde S_{m-2}\circ \ldots \circ \tilde S_0$$
the proof of Theorem \ref{thm:1} is completed by an 
application of Proposition \ref{splitstep2} to each $\tilde S_j$, and then composing. Finally, the 
proof of Theorem~\ref{thm:2} is immediate, since in that case, all the local coordinates used 
in the proofs of Lemma~\ref{extcell} and Lemma~\ref{extskel} below respect the splitting $L=\{L_1,...,L_k\}$
(see the last paragraph in Section~\ref{sec:AuxiliaryMetrics}).
$\hfill\square$

\medskip

\subsection{Proofs of Proposition~\ref{splitstep1} and Proposition~\ref{splitstep2}}

We start by giving two lemmas. 

\begin{lemma}\label{extcell}
Let $E^t:[0,1]\times \partial D^n\rightarrow SL_k(\mathbb C)$ (resp. $SL_k(\mathbb R)$) be continuous, with $(\mathrm{Id}-E^t)^2=0$
and $\mathrm{rank}(\mathrm{Id}-E^t)\in\{0,1\}$ for all $t$.  Then $E^t$ extends to 
a continuous map $\tilde E^t:[0,1]\times \overline{D^n}\rightarrow SL(k)$, with 
$(\mathrm{Id}-\tilde E^t)^2=0$
and $\mathrm{rank}(\mathrm{Id}-\tilde E^t)\in\{0,1\}$ for all $t$. Moreover we may ensure that 
$$
\|(\mathrm{Id}-\tilde E^t_x)\| \leq \sup_{y\in\partial D^n} \{\|(\mathrm{Id}-\tilde E^t_y)\|\}
$$
for all $x\in D^n$. 
\end{lemma}
\begin{proof}
Let $S_t\subset\partial D^n$ be the set such that $E^t=\mathrm{id}$. Then for each 
$x\in\partial D^n\setminus S_t$ we have that 
$$
E^t_x(v)= v+ \lambda(t,x)(v)\cdot w(t,x), 
$$
with $\lambda(t,x)\in (\mathbb R^k)^*$, where $\lambda$ is normalized such that $\|\lambda(t,x)\|=1$ for all $(t,x)$.  
Let $\pi:D^n\setminus\{0\}$ denote the projection $\pi(x)=\frac{x}{\|x\|}$.  
First, for all $(t,x)$ with $\pi(x)\in S_t$ we set $\tilde E^t_x=\mathrm{id}$.  Next, let $\chi\in C^\infty([0,1])$
with $\chi=0$ near $[0,1/2]$ and $\chi=1$ near $\{1\}$, and for each $(t,x)$ with $\pi(x)\notin S_t$
set 
$$
\tilde E^t_x (v)=  v+ \chi(\|x\|)\cdot \lambda(t,\pi(x))(v)\cdot w(t,\pi(x)), 
$$
\end{proof}

\begin{lemma}\label{extskel}
Let $X$ be a CW-complex and let $W\rightarrow X$ be a $GL_k(\mathbb C)$-bundle (resp. $GL_k(\mathbb R)$). 
Let $E^t_m\in\Gamma (X^m,W|_{X^m})$ be continuous with $(\mathrm{Id}-E^t_m)^2=0$
and $\mathrm{rank}(\mathrm{Id}-E^t_m)\in\{0,1\}$ for all $t\in [0,1]$, where $X^m$ denotes the $m$-skeleton of $X$.  
Then $E^t_m$ admits a continuous  extension $E^t\in\Gamma (X,W)$ with $(\mathrm{Id}-E^t)^2=0$
and $\mathrm{rank}(\mathrm{Id}-E^t)\in\{0,1\}$ for all $t\in [0,1]$. Moreover, if $E^t_m\in\Gamma_b(X^m,W|_{X^m})$
with $d_W(E^t_m,\mathrm{Id})<\epsilon$ for all $t$, then $d_W(E^t,\mathrm{Id})<\epsilon$ for all $t$. 
\end{lemma}
\begin{proof}
For each cell $e^{m+1}_\alpha$ attached to $X^m$ we have that $E^t_m$ has a suitable extension to $e^{m +1}_\alpha$
by the previous lemma, and the distance to the identity is controlled.  Hence, we have a suitable extension to $X^{m+1}$, and 
we may proceed by finite induction on the skeleton. 
\end{proof}

\emph{Proof of Proposition \ref{splitstep1}:}
We start by, for each $0$-cell, applying the Gram-Schmidt process with the parameter $t$ to the restriction of $S_t$ to the $0$-cell. 
This gives sections $E^t_{0,1},...,E^t_{0,N}$ satisfying the conditions of Lemma \ref{extskel}, and so that 
$$
E^t_{0,1}\circ...\circ E^t_{0,N}\circ (S_t|_{X_0})\in\Gamma(X_0,W|_{X_0}^{SO/SU}). 
$$
By Lemma \ref{extskel} we have that each $E^t_{0,j}$ extends to an element in $\Gamma(X,W^{SL})$.  \

We proceed by induction, and assume that there are $E^t_{j}\in\Gamma(X,W^{SL})$ such that 
\begin{equation}\label{compk}
E^t_{1}\circ...\circ E^t_{N}\circ (S_t|_{X_k})\in\Gamma(X_k,W|_{X_k}^{SO/SU}). 
\end{equation}
For each $(k+1)$-cell $\overline{e^{k+1}_\alpha}$ we apply the Gram-Schmidt process to the composition \eqref{compk}
in the local coordinates given by $\overline{D^{k+1}_\alpha}$. This gives sections $E^t_{\alpha,j}$ such that 
$$
E^t_{\alpha,1}\circ...\circ E^t_{\alpha,N}\circ E^t_{0,1}\circ...\circ E^t_{0,N}\circ (S_t|_{\overline{e^{k+1}_\alpha}})\in
\Gamma(X_{\overline{e^{k+1}_\alpha}},W|_{\overline{e^{k+1}_\alpha}}^{SO/SU}).
$$
By \eqref{compk} we have that each $E^t_{\alpha,j}$ is the identity map on $\partial e^{k+1}_\alpha$ and so 
extends to the identity map on $W|_{X_k}$. So the collection $\{E^t_{\alpha,j}\}$ does the job on $W|_{X_k}$, 
and by Lemma \ref{extskel} they globalise to suitable elements of  $\Gamma(X,W^{SL})$. $\hfill\square$

\medskip

\emph{Proof of Proposition \ref{splitstep2}:}
The proof is analogous the proof of Proposition \ref{splitstep1}, but applying the Gauss-Jordan process instead of Gram-Schmidt. 
The first key point to keep in mind, is that in order to apply Gauss-Jordan to a map $E:\overline{D^k}\rightarrow SL$ we need to assume 
that $\|E_x-\mathrm{Id}\|$ is sufficiently small in order to give an algorithm that works simultanuously for all $x$. 
This is ensured 
at each step of the inductive process by the initially chosen sufficiently small $\epsilon$, and control in norms 
when we extend using Lemma \ref{extskel}. The second point to keep in mind, is that passing from $X_k$
attaching a $(k+1)$-cell $e^{k+1}_\alpha$, all maps in play are already the identity map on $\partial e^{k+1}_\alpha$, 
and so the Gauss-Jordan process produced only identity maps there, that can all be extended to $X_k$.
$\hfill\square$

\subsection{Proofs of Theorem \ref{thm:3} and Theorem \ref{thm:RealSymp}}

The proof of Theorem 3 is analogous to the that in the previous section, but we need to make some remarks.  \

Firstly, consider the Hermitian metric $g$ on the $USp_{2k}$-bundle $V$ induced by the standard Hermitian inner product
on $\mathbb C^{2k}$. In the inductive construction, when attaching $m$-cells $\varphi:\overline{D^m}\rightarrow X$, we 
consider the $USp_{2k}$ bundle $\tilde V=\varphi^*V\rightarrow\overline{D^m}$.  Then there is a finite open cover $U_1,...,U_N$
of $\overline{D^m}$ with transition maps $A_{ij}:U_{ij}\rightarrow USp_{2k}$. In order to trivialize $\tilde V$ in a way that respects 
the metric $g$ we then need to trivialize $\tilde V$ finding maps $A_j:U_j\rightarrow USp_{2k}$ such that 
\begin{equation}\label{trans}
A_i = A_j \circ A_{ij}
\end{equation}
on $U_{ij}$.  But this amounts to finding a section of the associated $USp_{2k}$-bundle whose transition maps are 
defined precisely by \eqref{trans}, and such a section always exists on the contractible space $\overline{D^m}$. \

Secondly, note that when preforming the Gram-Smith or Gauss-Jordan process for symplectic matrices 
using elementary symplectic matrices, they are not on the form $u\mapsto u+\lambda(u)\cdot v$
with $\lambda(v)=0$ as in Lemma \ref{extcell}. In this case, as was shown in \cite{IKL19}, the symplectic elementary transformations 
are of the form
$$
u\mapsto u + \omega(u,v)\cdot w + \omega(u,w)\cdot v, 
$$
where $w\in\mathrm{Ker}(\omega(v,\cdot))$ and $v\in\mathrm{Ker}(\omega(w,\cdot))$.
Adapting the proof using these elementary maps is straight forward. \

Finally, note that the proof of Theorem \ref{thm:RealSymp} is simpler because of the compactness assumption on $X$, and 
there is no need for an initial application of Gram-Smith to transform into the orthogonal group, in order to be able to use 
the result on uniform homotopies.

\subsection{Proofs of Corollary~\ref{cor:Riemannian} and Corollary~\ref{cor:Kahler}} 
\begin{proof}[Proof of Corollary~\ref{cor:Riemannian}]
By Theorem~\ref{thm:1} it suffices to verify that $\exp(R(U,V))$ is a nullhomotopic special vector bundle automorphism. First of all, noting that
$$\exp(R(0,0))=\exp(0)=Id$$
we see that a homotopy from the identity map to $\exp(R(U,V))$ is given by
$$ t\mapsto S_t = \exp(R(tU,tV)). $$
Moreover, to see that $\exp(R(U,V))$ has determinant one we need to show that $R(U,V)$ lies in the Lie algebra of $\mathrm{SL}_k(\mathbb R)$, in other words that the trace of $R(U,V)$ is zero. A conceptual proof of this is given by the fact that $R(U,V)$ arise as the variation of the holonomy of small quadrilaterals on $X$ (the restricted holonomy of a Riemannian manifold lies in $SO_k$).  For a more direct proof, let $\langle \cdot,\cdot\rangle$ be the inner product on $TX$ given by the Riemannian metric. One of the basic symmetries of the Riemann tensor is that
\begin{equation} \langle R(U,V) Y, Z \rangle = - \langle Y, R(U,V) Z \rangle. \label{eq:SkewSymmetric} \end{equation}
Writing this in a frame which is orthogonal with respect to the Riemannian metric we get that the matrix representing $R(U,V)$ is skew-symmetric. In particular its trace is zero. 
\end{proof}

\begin{proof}[Proof of Corollary~\ref{cor:Kahler}]
By the same argument as in the proof of Corollary~\ref{cor:Riemannian} it suffices to verify that $\exp(R(U,V))$ is a symplectic vector bundle automorphism, in other words that $R(U,V)$ lies in the Lie algebra of the symplectic group. As in the proof of Corollary~\ref{cor:Riemannian}, since the holonomy of a Kähler manifold lies in $\mathrm{U}_k = \mathrm{Sp}_{2k}(\mathbb R)\cap \mathrm{O}_{2k}$, a conceptual proof is given by the fact that $R(U,V)$ arise as the variation of the holonomy of small quadrilaterals on $X$. For a more direct proof, note that the Lie algebra of $\mathrm{Sp}_{2k}(\mathbb R)$ is given by the group of matrices $A$ satisfying 
\begin{equation}
    \Omega A + A^T\Omega = 0 \label{eq:SympLieAlg}
\end{equation} 
where $\Omega$ is the $2k\times 2k$-matrix given by
$$ 
\Omega = 
\begin{bmatrix} 
0 &  Id_k\\
-Id_k & 0 
\end{bmatrix}.
$$
Now, since the complex structure $J$ on $X$ is parallel with respect to the Levi-Civita connection of the Riemannian metric, we get that 
\begin{eqnarray} R(U,V)J & = & \left(\nabla_U\nabla_V - \nabla_V\nabla_U - \nabla_{[U,V]}\right) J \nonumber \\
& = & J \left(\nabla_U\nabla_V - \nabla_V\nabla_U - \nabla_{[U,V]}\right) \nonumber \\
& = & JR(U,V). \end{eqnarray}
Writing this identity in a frame which is orthogonal with respect to the Riemannian metric and in which $J$ is represented by the standard matrix $\Omega$ gives $ \Omega A = A \Omega $
where $A$ is the matrix representing $R(U,V)$. Since $A$ is also skew-symmetric by \eqref{eq:SkewSymmetric}, we get 
$$\Omega A = A\Omega = -A^T \Omega$$
and \eqref{eq:SympLieAlg} follows. 
\end{proof}

\end{document}